\newcommand{\RvdH}[1]{\todo[inline, color=magenta]{{\rm Remco: #1}}}
\newcommand{\ch}[1]{{\color{red}#1\color{black}}}
\definecolor{theme}{RGB}{15, 87, 24} 
\definecolor{lighttheme}{RGB}{51, 153, 63} 
\definecolor{block}{RGB}{102, 60, 0} 
\definecolor{lightblock}{RGB}{255, 238, 214} 
\definecolor{alert}{RGB}{212, 43, 43} 
\def\pagespace{3cm}
\newtheorem{theorem}{Theorem}[section]
\newtheorem{proposition}[theorem]{Proposition}
\newtheorem{lemma}[theorem]{Lemma}
\newtheorem{corollary}[theorem]{Corollary}
\newtheorem{assumption}{Assumption}
\newcommand{\collaborator}[3]{\marginpar{\footnotesize\textcolor{#2!50!black}{\textbf{#1:}} {#3}}}
\newcommand{\benoit}[1]{\collaborator{Beno\^it}{green}{#1}}
\newcommand{\pskip}{\left.\hspace{0.5cm}\right.} 
\newcommand{\bR}{\mathbb{R}}
\newcommand{\prob}{\mathbb{P}}
\newcommand{\expec}{\mathbb{E}}
\newcommand{\rI}{\mathbbm{1}}
\newcommand{\wG}{\mathcal{G}}
\newcommand{\rG}{\mathbb{G}}
\newcommand{\rH}{\mathbb{H}}
\newcommand{\uniform}{\textsc{Uniform}([0,1])}
\newcommand{\lwc}{\overset{\mathrm{lwc}}{\longrightarrow}}
\newcommand{\lcp}{\overset{\mathrm{lc}\prob}{\longrightarrow}}
\newcommand{\lpc}{\overset{\mathrm{lpc}}{\longrightarrow}}
\newcommand{\MSF}{\mathrm{F}}
\newcommand{\rMSF}{\mathbb{F}}
\newcommand{\MST}{\mathrm{T}}
\newcommand{\rMST}{\mathbb{T}}
\renewcommand{\root}{o}
\newcommand{\prim}{\mathcal{P}}
\newcommand{\comp}{\mathcal{C}}
\newcommand{\probc}{\overset{\prob}{\longrightarrow}}
\newcommand{\speed}{\mathcal{K}}
\newcommand{\eqn}[1]{\begin{equation}#1\end{equation}}
\newcommand{\sss}{\scriptscriptstyle}
\newcommand{\convp}{\xrightarrow{\sss\prob}}
\newcommand{\invisible}[1]{}
\begin{document}

\author{Beno\^it Corsini \and Rowel G\"undlach \and Remco van der Hofstad}
\title{Local limit of Prim's algorithm}

\setcounter{tocdepth}{1}

\begin{abstract}
{We study the local evolution of Prim's algorithm on large finite weighted graphs. When performed for $n$ steps, where $n$ is the size of the graph, Prim's algorithm will construct the minimal spanning tree (MST). We assume that our graphs converge locally in probability to some limiting rooted graph. In that case, Aldous and Steele already proved that the local limit of the MST converges to a limiting object, which can be thought of as the MST on the limiting infinite rooted graph.

Our aim is to investigate {\em how} the local limit of the MST is reached \textit{dynamically}. For this, we take $tn+o(n)$ steps of Prim, for $t\in[0,1]$, and, under some reasonable assumptions, show how the local structure interpolates between performing Prim's algorithm on the local limit when $t=0$, to the full local limit of the MST for $t=1$. Our proof relies on the use of the recently developed theory of {\em dynamic local convergence}. We further present several examples for which our assumptions, and thus our results, apply.
}
\end{abstract}

\maketitle

\tableofcontents

\section{Introduction}\label{sec:intro}

\subsection{Background and novelty}

In this paper we analyse the evolution of the local structure of the subtree discovered by Prim's algorithm~\cite{Prim1957} as the number of steps in the algorithm varies.
This process is one of the most commonly used greedy algorithms to find the minimal spanning tree (MST) of a rooted weighted graphs.
However, Prim's algorithm is different  from other greedy methods to construct the MST, such as Bor\r{u}vka's algorithm \cite{Nesetril2001} and Kruskal's algorithm \cite{Kruskal1956}, as it discovers the tree {\em locally} starting from a given root. More precisely, it sequentially adds the neighbouring edge with lowest weight that does not form a cycle with the current component. When running Prim's algorithm to completion, the outcome is known to be the MST of the whole finite graph.
In that case, the local limit has already been analysed in~\cite[Theorem 5.6]{Aldous2004}, where the authors show local convergence towards the minimal spanning forest of the infinite graph.
\smallskip

The {\em novelty} of our paper is the study of the outcome of Prim's algorithm when only considering a partial number of steps.
More precisely, we provide a characterisation of the limit of Prim's algorithm as a function of $t$ when it runs for $tn+o(n)$ steps on a graph with $n$ vertices, for $t\in[0,1]$.
This provides a process on rooted weighted graphs and fits within the recent research field of dynamic local convergence~\cite{Dort2023,Milewska2023}.
This process convergence provides key insights on the evolution of the structure of Prim's algorithm when the number of steps grows.
\smallskip

One of the noteworthy interpretations of our main result (Theorem~\ref{thm:Prim convergence}), is {\em how} Prim's algorithm discovers the local neighbourhood of the MST around the root.
Indeed, our result states that it first spends a sub-linear time in the local neighbourhood, during which it creates the \textit{invasion percolation cluster}, after which it explores far away parts of the graph, until it reaches a linear number of steps, where it regularly returns to the local neighbourhood and adds edges of the minimum spanning forest of larger and larger weights.
Our result further shows that the local structure of the graph does not change as soon as we run Prim's algorithm for $n-o(n)$ steps, allowing for a sub-linear correction.
In particular, this means that the last $o(n)$ steps of the algorithm always explore faraway neighbourhoods of the graph.

\subsection{Main result}\label{sec:main result}

For the rest of this work, we consider a sequence $(G_n)_{n\geq1}$ of graphs on $n$ vertices, which converges locally in probability towards a rooted graph $(G,o)$. Local convergence was defined independently by Benjamini and Schramm \cite{Benjamini2001} and Aldous and Steele \cite{Aldous2004}; see \cite[Chapter 2]{VanderHofstad2024} for an extensive introduction to local convergence.
We further endow $(G_n)_{n\geq1}$ and $G$ with independent $\uniform$ edge weights and sample a random vertex $o_n$ from $G_n$;
we write $\rG=(G,o,w)$ and $\rG_n=(G_n,o_n,w_n)$ for the corresponding rooted weighted graphs and call these the \textit{standard extensions} of $(G_n)_{n\geq1}$ and $(G,o)$.
We are now interested in the local limit of $\prim_{k_n}(\rG_n)$, defined as the subtree obtained after $k_n$ steps of Prim's algorithm, when $n\rightarrow\infty$ and for arbitrary increasing sequences $(k_n)_{n\geq1}$.
\smallskip

As discussed above, when $k_n=n$, then $\prim_{k_n}(\rG_n)$ is exactly the MST of $(G_n,w_n)$, which converges locally towards the minimum spanning forest $\rMSF^{\rG}$ of $\rG$~\cite[Theorem~5.4]{Aldous2004} (see Section~\ref{sec:msf + eipc} for exact definitions).
Using this result, we know that there exist growing radii $(r_n)_{n\geq1}$ such that $(\rG_n,\prim_n(\rG_n))$ can asymptotically be coupled with $(\rG,\rMSF^{\rG})$ on balls of radius $r_n$. 
\smallskip

We say that the sequence of functions $(k_n(t))_{n\geq1,t\in[0,1]}$ is a \textit{linearly growing sequence} with respect to $(G_n)_{n\geq1}$ if $k_n(0)=r_n$ and $k_n(t)/n\rightarrow t$ as $n\rightarrow\infty$ for all $t\in[0,1]$ (see Section~\ref{sec:local prop} for a more precise definition of linearly growing sequences).
Our main result characterises the behaviour of Prim's algorithm for any arbitrary linearly growing sequence.
In particular, we investigate the convergence of the rooted graph obtained by applying Prim $k_n(t)$ steps as a stochastic process on rooted graphs, using the recently developed theory of dynamic local convergence \cite{Milewska2023}.
\smallskip

On the infinite graph $\rG$, we note that the minimum spanning forest $\rMSF^{\rG}$ contains the subtree $\prim_k(\rG)$ obtained by running Prim's algorithm on $\rG=(G,o,w)$ for any arbitrarily chosen number of steps $k$.
Thus, $\rMSF^{\rG}$ also contains the limit $\prim_\infty(\rG)$ of Prim's algorithm, corresponding to the invasion percolation cluster.
We now generalise this definition to the \textit{expanded invasion percolation cluster} at level $p$, which we denote by  $\rMSF^{\rG}_+(p)$. We define this as the union of the invasion percolation cluster and the edges in the minimal spanning forest $\rMSF^{\rG}$ with weight less than $p$ (see Section~\ref{sec:msf + eipc} for a more precise definition).
This tree is fundamental in characterising the limit of Prim's algorithm and we now provide the two assumptions required for our main theorem to hold.
\medskip

Given a rooted weighted graph $\rG=(G,o,w)$, write $\comp^{\rG}_o(p)$ for the component containing $o$ when only keeping edges with weight less than $p$ and let $\theta(p)=\theta^{\rG}(p)$ be the (annealed) survival probability of $o$ at level $p$ in $\rG$, i.e.,
\begin{align}\label{eq:theta}
    \theta(p)&=\prob\Big(\big|\comp^{\rG}_o(p)\big|=\infty\Big)\,.
\end{align}
We further define its critical value by $p_c=\inf\{p:\theta(p)>0\}$ and its (pseudo-)inverse, for $t\in[0,1)$, by
\begin{align}\label{eq:theta inv}
    \theta^{-1}(t)&=\inf\Big\{p:\theta(p)>t\Big\}\,.
\end{align}
For convenience, we define $\theta^{-1}(1)=1$. When $\theta(p)>0$, it is possible for the root of $\rG$ to be in an infinite component at level $p$.
It is thus natural to wonder what the rest of the graph looks like in the case where $\theta(p)>0$ but the root does not belong to an infinite component.
Our first assumption states that, whenever $\theta(p)>0$, there exists a component in $\rMSF^{\rG}$ which is infinite:

\begin{assumption}[Percolation function and infinite percolation components]\label{ass:speed}
    ~The (infinite)\\
    rooted~and~weighted graph $\rG$ is said to satisfy the {\rm local percolation of a giant} assumption if, for any $p$, the following equivalence holds:
    \begin{align*}
        \theta(p)>0~\Longleftrightarrow~\prob\Big(\exists v\in V(\rMSF^{\rG})\colon \big|\comp^{\rG}_v(p)\big|=\infty\Big)=1\,.
    \end{align*}
\end{assumption}
Assumption \ref{ass:speed} entails that even when the root does not percolate at level $p$, if $p$ is such that $\theta(p)>0$, then there are vertices close to the root that percolate.
It plays a key role in ensuring that the giant component in the finite graph is locally visible, and can thus be reached by Prim's algorithm after a bounded number of steps.
\smallskip

In Section~\ref{sec:examples}, we discuss why this assumption is required and provide examples of (random) graphs that do not satisfy it.
We further conjecture that this assumption can be omitted and the main result would remain true, since we did not find counterexamples to prove otherwise.
\smallskip

Besides the relation between the probability to have an infinite component and the existence of an  infinite component in the minimum spanning forest, we need a further assumption on $\theta$.
Indeed, since $\theta$ plays a key role in the evolution of the limit (see Theorem~\ref{thm:Prim convergence}), it is also important for this function to be smooth enough, as stated in the following assumption:

\begin{assumption}[Smoothness of the percolation function]\label{ass:smooth}
    The (infinite) rooted weighted graph $\rG$ is said to satisfy the {\rm smooth percolation} assumption if $p\mapsto\theta(p)$, considered as a function on  $(p_c,1]$, is continuous and strictly monotone.
\end{assumption}
We emphasize that $\rG$ is possibly random, and that $\theta(p)$ is the {\em annealed} percolation probability, where we take the average w.r.t.\ the random graph $\rG$ and the edge statuses on it. Note that Assumption \ref{ass:smooth} implies that $t\mapsto \theta^{-1}(t)$ is continuous and strictly increasing as a function from $[0,1]$ to $[p_c,1]$, with $\theta^{-1}(0)=p_c$, which, in fact, is the main property that $\theta$ needs to satisfy for our proofs. 
\smallskip

While we believe Assumption \ref{ass:smooth} holds in most cases, it is still possible to create counterexamples where $\theta$ jumps multiple times (see Section~\ref{sec:examples}).
However, these examples tend to also not satisfy Assumption \ref{ass:local} below, and so it is unclear to us whether Assumption \ref{ass:smooth} is redundant or not. Without proof of the contrary, we include it and keep its removal as an open problem.
\smallskip

The previous two assumptions only pose  conditions on the local limit of the graph sequence. We next state an assumption to make sure that the sequence itself behaves as desired. Given the relation between the MST  and connected components at level $p$ in the percolated graph, we next state an assumption linking the survival probability and size of the giant, as introduced in~\cite[Theorem 2.28]{VanderHofstad2024} for general (unpercolated) graph sequences:

\begin{assumption}[Percolation giant is almost local]\label{ass:local}
    The sequence of rooted weighted graphs $(\rG_n)_{n\geq1}$ is said to satisfy the {\rm percolation giant is almost local} assumption if, for any $p$,
    \begin{align*}
        \lim_{k\rightarrow\infty}\limsup_{n\rightarrow\infty}\frac{1}{n^2}\expec\bigg[\Big|\Big\{u,v\in V(\rG_n):\big|\comp^{\rG_n}_u(p)\big|\geq k,\big|\comp^{\rG_n}_v(p)\big|\geq k,\comp^{\rG_n}_u(p)\cap\comp^{\rG_n}_v(p)=\varnothing\Big\}\Big|\bigg]=0\,.
    \end{align*}
\end{assumption}
Assumption \ref{ass:local} allows us to relate the size of the largest component in $\rG_n$ at level $p$ to $\theta^{\rG}(p)$ and we invite the reader to take a look at Propositions~\ref{prop:theta} and~\ref{prop:local giant} to gain a first idea of the importance of this assumption.
This assumption plays an essential role in our main theorem, as it is a key property required to connect the behaviour of percolated components between finite and infinite graphs
\medskip

With these assumptions in hand, we now have all the ingredients to state our main theorem.
The following result relies on the \textit{local process convergence}, which corresponds to the natural extension of the local topology on graphs to the process topology (see Section~\ref{sec:lpc} for more details):

\begin{theorem}[Local limit of Prim's algorithm]\label{thm:Prim convergence}
    Let $(G_n)_{n\geq1}$ be converging locally in probability to $(G,o)$.
    Let $(\rG_n)_{n\geq1}$ and $\rG$ be the respective standard expansions of $(G_n)_{n\geq1}$ and $(G,o)$.
    Define $\theta$ as in~\eqref{eq:theta} and let $k=(k_n(\cdot))_{n\geq1}$ be a linearly growing sequence with respect to $(G_n)_{n\geq1}$ as defined in Section~\ref{sec:local prop}.
    If $\rG$ satisfies Assumption~\ref{ass:speed}, $\theta(p)$ satisfies Assumption~\ref{ass:smooth}, and $(\rG_n)_{n\geq1}$ satisfies Assumption~\ref{ass:local}, then
    \begin{align*}
        \Big(\prim_{k_n(t)}(\rG_n)\Big)_{t\in[0,1]}\lpc\Big(\rMSF^{\rG}_+\big(\theta^{-1}(t)\big)\Big)_{t\in[0,1]}\,,
    \end{align*}
    where the convergence occurs according to the local process convergence defined in Section~\ref{sec:lpc}.
\end{theorem}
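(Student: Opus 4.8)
The plan is to establish the local process convergence by first proving finite-dimensional convergence at each fixed $t \in [0,1]$, and then upgrading to process convergence using a monotonicity/tightness argument, since $t \mapsto \prim_{k_n(t)}(\rG_n)$ and $t \mapsto \rMSF^{\rG}_+(\theta^{-1}(t))$ are both increasing families of subgraphs (edges are only added as $t$ grows), so the local process topology should essentially be controlled by a countable dense set of times together with continuity of the limit in $t$ (which is where Assumption~\ref{ass:smooth}, giving continuity and strict monotonicity of $\theta^{-1}$, enters). First I would handle the two extreme regimes separately: for $t=0$, by the definition of a linearly growing sequence we have $k_n(0) = r_n$, the radius along which $(\rG_n, \prim_n(\rG_n))$ couples with $(\rG, \rMSF^{\rG})$; since $r_n = o(n)$ and $r_n \to \infty$, the first $r_n$ steps of Prim on $\rG_n$ agree (on a growing ball around $o_n$) with the first $r_n$ steps of Prim on $\rG$, whose limit is the invasion percolation cluster $\prim_\infty(\rG) = \rMSF^{\rG}_+(p_c) = \rMSF^{\rG}_+(\theta^{-1}(0))$. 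For $t=1$, the statement is exactly the Aldous–Steele result $\prim_n(\rG_n) = \mathrm{MST}(\rG_n) \lwc \rMSF^{\rG} = \rMSF^{\rG}_+(1) = \rMSF^{\rG}_+(\theta^{-1}(1))$, again invoking $\theta^{-1}(1)=1$.

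For a fixed interior $t \in (0,1)$, set $p = \theta^{-1}(t)$, so $\theta(p) = t$ by Assumption~\ref{ass:smooth}. The heart of the argument is the following dichotomy for the behaviour of Prim after $\approx tn$ steps. At any moment, Prim's current component is precisely the union of all percolation clusters (at the current "frontier weight" level) that it has absorbed; the key structural fact about Prim is that once its component contains a vertex whose percolation cluster at level $p$ is the giant, the component swallows that entire giant while using only edges of weight $\le p$, contributing $\approx \theta^{\rG_n}(p) \cdot n \approx \theta(p) n = tn$ vertices (here Assumption~\ref{ass:local} and Propositions~\ref{prop:theta} and~\ref{prop:local giant} are used to identify the giant's density with $\theta(p)$). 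Assumption~\ref{ass:speed} guarantees that such a giant-containing vertex is reachable from $o_n$ within $o(n)$ Prim steps — because there is, with probability tending to one, a vertex near $o_n$ lying in an infinite (hence, in the finite graph, giant) component at level $p$, and the minimum spanning forest edges connecting $o_n$ to it all have bounded weight. Thus I would argue that after $k_n(t) = tn + o(n)$ steps, Prim has (i) finished exploring the local invasion percolation cluster, (ii) travelled out to and absorbed the level-$p$ giant, spending exactly enough steps to match the budget $tn+o(n)$, and (iii) in doing so, locally near $o_n$ added exactly those MSF edges of weight $< p$ — because an MSF edge $e$ near the root of weight $w(e) = q$ is added by Prim precisely when Prim's frontier weight first exceeds $q$, which (by the giant-absorption accounting) happens around step $\theta(q) n$. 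This matches the target $\rMSF^{\rG}_+(p) = \rMSF^{\rG}_+(\theta^{-1}(t))$ on any fixed ball around the root, which is exactly local convergence at time $t$.

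To make the accounting in the previous paragraph rigorous I would introduce Prim's \emph{frontier weight} process $\phi_n(k)$ = the weight of the $k$-th edge added, which is nondecreasing in $k$ on the "linear" part of the run (after the sublinear invasion-percolation phase, where it is actually not monotone but stays $\le p_c + o(1)$). Then I would prove a law-of-large-numbers statement: for Prim run on $\rG_n$, the number of steps needed to reach frontier weight $p$ is $\theta^{\rG_n}(p)\, n + o_{\prob}(n)$, which combined with Assumption~\ref{ass:local} (ensuring the percolation giant is unique and has relative size $\to \theta(p)$) gives $\phi_n(k_n(t)) \convp \theta^{-1}(t)$. Local convergence of the graph $\rG_n$ then transfers the identity $\{$edges near $o_n$ added by step $k_n(t)\} = \{$MSF edges near $o$ of weight $< \phi_n(k_n(t))\} \cup \prim_\infty$ from the finite to the infinite graph, using continuity of $\theta^{-1}$ at $t$ (Assumption~\ref{ass:smooth}) to ensure no MSF edge sits exactly at the boundary weight $\theta^{-1}(t)$ with positive probability. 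Finally, I would assemble the fixed-$t$ results into process convergence: monotonicity of both sides in $t$ plus continuity of $t\mapsto \rMSF^{\rG}_+(\theta^{-1}(t))$ (no atoms in the MSF weight measure near the root, again from Assumption~\ref{ass:smooth}) allow a standard "convergence on a countable dense set of times $\Rightarrow$ process convergence" argument in the Skorokhod-type local process topology of Section~\ref{sec:lpc}.

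\textbf{Main obstacle.} I expect the crux to be step (ii)–(iii): controlling the \emph{global} excursions of Prim. Between visits to the root neighbourhood, Prim wanders through $\Theta(n)$ vertices far from $o_n$, and one must show that this faraway behaviour does not disturb the local picture \emph{and} that the step-count spent there is accurately $\theta$-governed — i.e. that the frontier-weight process has no macroscopic jumps and that the "giant absorption" really does consume $\theta(p)n + o(n)$ steps rather than, say, getting stuck in a large-but-subgiant component. This is precisely what Assumptions~\ref{ass:speed} and~\ref{ass:local} are designed to rule out, so the real work is in turning those hypotheses into quantitative control of the Prim trajectory; the machinery of dynamic local convergence from~\cite{Milewska2023} should provide the right framework, but matching its hypotheses to the Prim dynamics will be the delicate part.
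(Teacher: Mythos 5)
Your proposal follows essentially the same architecture as the paper: one-dimensional convergence at fixed $t$ via the mechanism ``Prim reaches the level-$p$ giant in $o(n)$ steps (Assumption~\ref{ass:speed} plus Proposition~\ref{prop:speed}), absorbs it in $\theta(p)n+o(n)$ steps (Assumption~\ref{ass:local} via Propositions~\ref{prop:theta} and~\ref{prop:local giant}), and the local outcome is $\prim_\infty(\rG)$ together with the MSF edges of weight below $p$'', then finite-dimensional convergence by a union bound, then tightness from monotonicity of Prim in $k$ together with the continuity of $\theta^{-1}$ from Assumption~\ref{ass:smooth}. The one place you diverge is the technical device for the marginal: you propose a law of large numbers for the \emph{frontier weight} $\phi_n(k)$ and the identification $\phi_n(k_n(t))\convp\theta^{-1}(t)$, whereas the paper instead proves convergence for the data-dependent step count $\speed(p)+|\comp^{\rG_n}_{(1)}(p)|$ (Proposition~\ref{prop:main result}, resting on the structural decomposition of $\rMSF^{\rG}_+(p)$ in Lemma~\ref{lem:eipc struct}) and then sandwiches $k_n(t)$ between such counts at levels $p_-<\theta^{-1}(t)<p_+$, using monotonicity in $k$. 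The sandwich buys you something concrete: the frontier weight is \emph{not} monotone even in the linear phase (after a heavy edge is added, cheaper boundary edges can reappear), so ``the number of steps needed to reach frontier weight $p$'' is delicate to define and your LLN would in effect still need the two-sided comparison at $p_\pm$; by working with component sizes rather than edge weights, the paper sidesteps this entirely. Your sketch is otherwise sound, and your identified ``main obstacle'' is precisely what Proposition~\ref{prop:main result} resolves.
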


The proof of Theorem~\ref{thm:Prim convergence} can be found in Section~\ref{sec:process}.
It can be reduced to three main steps.
First, we prove the one-dimensional convergence of Prim's algorithm towards the correct limit.
This proof boils down to the fact that the largest component in $\rG_n(p)$ has size approximately $n\theta(p)$, and so Prim's algorithm applied $n\theta(p)$ steps will first quickly reach the largest component and then remain in that component, explaining the structure of $\rMSF^{\rG}_+(p)$.
Second, we prove that the previous one-dimensional convergence can be extended to the finite-dimensional distributions, and show that the limit satisfies the assumptions of the local process convergence as defined in Section~\ref{sec:lpc}. 
Finally, we prove the tightness of the process (as defined in Section~\ref{sec:lpc}), and complete the proof of Theorem~\ref{thm:Prim convergence}, in Section~\ref{sec:process}.
\smallskip

It is worth mentioning that the previous convergence not only occurs as a process convergence but also jointly with the convergence of $\rG_n$ towards $\rG$.
This is a direct consequence of the proof method from Section~\ref{sec:thm}, where we start by coupling $\rG_n$ and the minimal spanning tree on it, to $\rG$ and the minimal spanning forest on it, and show the convergence of Theorem~\ref{thm:Prim convergence} under this coupling.
We decided not to state it as a joint convergence here for clarity.
\smallskip

\paragraph{\bf Organisation}This paper is organised as follows.
In Section~\ref{sec:background} we give some background on the different definitions from the introduction and state some useful preliminary results.
In Section~\ref{sec:thm} we then apply the previous results in order to prove Theorem~\ref{thm:Prim convergence}.
Finally, in Section~\ref{sec:examples}, we consider a few interesting specific cases and examples.
We also invite the reader to take a look at Appendix~\ref{app:notations} for a list of often-used symbols and notations.
We now provide a more detailed review of the literature relevant to this work.

\subsection{Related literature}\label{sec:literature}
The introduction of local convergence \cite{Aldous2004,Benjamini2001} at the start of this millennium has formalised the connection between analysing graphs and graph processes on different scales. The power of this technique is demonstrated in our results for Prim's algorithm, but has also been demonstrated for many different processes in the literature. We present an overview of how the introduction of local convergence gave rise to a formal local description of graphs and how this was used to locally analyse global graph properties and graph processes. Next, we emphasise the importance of the MST and present the state-of-the-art results for the MST on random graphs, both from a local and global perspective.
\medskip

\paragraph{\bf Local convergence of graphs}
The early applications of local convergence show locally tree-like behaviour for popular sparse random graphs. Key examples include
the Erd\H{o}s-R\'enyi graph \cite{Dembo2010}, the uniform random graph and the configuration model \cite{Dembo2010}, inhomogeneous  random graph \cite{Bollobas2007} and \cite[Chapter 3]{VanderHofstad2024}, and the preferential attachment model \cite{Berger2014,Garavaglia2023}. 
Recent years have seen an increasing interest in this line of work and have extended the results to more complex graphs such as uniform planar graphs \cite{Stufler2021}, spatial inhomogeneous  random graphs \cite{Hofstad2023}, 
Gibbs random graphs \cite{Endo2020}, and random intersection graphs \cite{Kurauskas2022}.
For more extensive overviews, we refer to \cite{Banarjee2024,VanderHofstad2024}.
While the field of local convergence for random graphs developed, others investigated how to optimally make use of these novel results. Let us mention two main applications of local convergence results: First, local convergence proved itself to be useful in analysing global graph properties, such as the existence, size and uniqueness of the giant. Second, processes on random graphs can often be analysed from a local perspective. Next, we give a short overview of both applications.
\medskip

\paragraph{\bf Local analysis of global objects}
Many sparse random graphs are characterised by their local limit to such a strong degree, that their structure can be used to analyse global graph properties. 
\cite[Chapter 2.5]{Hofstad2023} illustrates this in detail for different graphs measures, such as
the global cluster coefficient and  assortativity coefficient for graphs that have a uniformly integrable second and third moment for the degree of a uniformly chosen vertex respectively. Related is the PageRank distribution \cite{Garavaglia2020} that is also well studied from a local perspective.

Such results also extend to an analysis of the number of spanning trees for large sparse graphs \cite{Lyons2005,Salez2013}, to the spectral measure for locally tree-like graphs \cite{Bhamidi2012,Bordenave2011} and to graph colourings \cite{Coja-Oghlan2018}.
A similar analysis for the critical percolation probability for transitive graphs has seen much attention over the years. First conjectured that this was indeed a local property (see \cite{Benjamini2011} for details), many works have since been extending the class of graphs for which the critical percolation probability was indeed local, see \cite{Contreras2023} and the references therein.
The conjecture was finally proven in full generality in \cite{Easo2023}.  
Finally, recent work shows that, under some conditions on the graph, the giant component can be completely characterised from a local perspective \cite{VanderHofstad2021b,VanderHofstad2024, VanderHofstad2024b}. In this case, the giant was also described as \lq almost local\rq.
 
We finally remark that these results find direct practical applications in the mathematical physics literature via, for example, the factor model \cite{Dembo2013} and the Ising model. The latter model has received a significant amount of attention. Here 
one assigns labels (or spins) $\{-1,1\}$ to a vertices of a random graph according to the Boltzmann distribution, which forms a central representation for ferromagnetic fields. The works of \cite{Dembo2010,Dembo2013, Montanari2012} show that for a multitude of locally tree-like graphs the spin distributions can be analysed from the local limit.
These works have later been extended to different graphs and to different properties of the Ising model from both a theoretical and practical point of view, see \cite{Albenque2021,Anandkumar2012,Dembo2013, Dommers2014,Yu2023}.
\medskip

\paragraph{\bf Processes on a random graph}
A second key development initiated by local convergence is the translation of processes on graphs to their local limit. Ideally
one hopes that analysing a process on a random graph is similar to analysing the process on the local limit. We first observe such a phenomenon on dynamic random graphs, for example the Erd\H{o}s-R\'enyi and inhomogeneous  random graph \cite{Dort2023} and the random intersection graph \cite{Milewska2023}.   
Interacting particle systems, where states of vertices develop via a Markov or diffusion process based on their neighbours, have been extensively analysed, see \cite{Lacker2023} and the references therein, where the authors show that the vertex interaction translates to the local limit. Related examples have also been studied, such as the contact process \cite{Mountford2013}, 
SIR epidemic models \cite{Milewska2025}, and voter models \cite{Huo2018}. 
\medskip

\paragraph{\bf Algorithms for the minimum spanning tree}
There are many classical construction algorithms for the MST dating back to the early 20th century, such as Bor\r{u}vka's algorithm \cite{Nesetril2001}, Kruskal's algorithm \cite{Kruskal1956} and Prim's algorithm \cite{Prim1957}, which generally run in $O(E\log(V))$ time complexity \cite{Pettie2002}. From a discrete optimisation point of view, the main interest in the MST is to improve the efficiency of search algorithms, see for example \cite{Khan2019} and the reference therein.

For this work, the geometry of the MST in disordered networks is of significant importance. To the best of our knowledge, interest in the topic arose in the mid-eighties for properties of MST on the complete graph with random edge weights \cite{Frieze1985} and for the Euclidean minimum spanning tree \cite{Aldous1992,Steele1988}
and the references therein. This was later further generalised in \cite{Alexander1995}.
Shortly after the introduction of local weak convergence in the beginning of the new millennium,
Aldous and Steele \cite{Steele2002} examined the MST on the weighted complete graph, and related that to a minimum spanning forest on the Poisson weighted infinite tree (PWIT). This work was later extended to general graphs in \cite{Aldous2004}.

The geometry and other properties of the MST find many applications in applied physics, see for example \cite{Dobrin2001} and the references therein. 
A classic example of such geometrical property is the diameter of the MST, often used as an upper bound for the typical distance between uniform vertices. These results hold for many classes of disordered networks \cite{Braunstein2003}, 
due to the universality of the MST \cite{Dobrin2001}.
\medskip

\paragraph{\bf Local limits for the minimum spanning tree} So far, local convergence of the MST (i.e., $k_n=n$) has been shown for general graphs, see \cite[Theorem 5.4]{Aldous2004} and more in depth for the complete graph with random edge weights \cite{Addario-Berry2013,NacTan22}. 
\medskip

\paragraph{\bf Global limits for the minimum spanning tree}  The global perspective of the MST is usually captured by its {\em scaling limit}. This was first derived for the complete graph in \cite{Addario-Berry2017}, and is closely related to the Erd\H{o}s-R\'enyi random graph. For other random graph models, the global geometry of the MST has also been analysed before, such as the 3-regular graph \cite{Addario-Berry2021} and the inhomogeneous random graph with power-law degrees \cite{Bhamidi2024}. Interestingly, while the scaling limit of these MSTs are intimately related to large critical percolation cluster on them, the Hausdorff dimension of the MST is generally larger than that of critical percolation clusters. Indeed, for the complete graph, the Hausdorff dimension of critical percolation clusters equals 2 (as for the closely related continuum random tree), while for the scaling limit of the MST it is 3 \cite{Addario-Berry2017}.
\medskip

\paragraph{\bf Quenched versus annealed}
As one can consider this model as random disorder (the uniform edge weights) in a random environment (the random graph), we emphasise that our results are for the \textit{annealed setting}, that is, where one averages out over the random environment. This is in contrast with the \textit{quenched setting}, where one shows results for an almost sure realisation of the environment. This distinction is important as it is known that results can differ between regimes, see for example \cite{Arous2005} and the references therein.

\subsection{Discussion of the results}
In the following, we discuss the implications of Theorem~\ref{thm:Prim convergence}. We first illustrate how this result completely describes how Prim's algorithm discovers a random graph from a local perspective. We provide simulations of Prim's algorithm on large graphs and show how it follows patterns found in our theoretical results. Finally, we show that our results also provide a good description of the addition times of edges.
\medskip

\paragraph{\bf Discovering the MST}

Analysing the local limit of the MST on (random) graphs via Prim's algorithm involves two limits: one relating to the graph size (i.e., $n$) and the other relating to number of steps of Prim's algorithm (i.e., $k$). Except for the clear condition $n\geq k$, there is some freedom in how these limits are taken. Theorem \ref{thm:Prim convergence} illustrates the behaviour for different choices of the relation between $n$ and $k$. 
First, by taking $k=o(n)$, we locally obtain the invasion percolation cluster of the local limit $\rMSF^{\rG}_+(p_c)=\prim_\infty(\rG)$.
This agrees with simply taking these limits \textit{sequentially}, i.e., taking $n\to\infty$ followed by $k\to\infty$. 
\smallskip

While clearly the local neighbourhood of the MST is not fully explored in this setting (consider the example of a single vertex attached to the root with a very high weight), this directly implies that $k$ growing (arbitrary slowly) in $n$ is not sufficient to discover the full local neighbourhood of the MST (at least when $p_c<1$).
Indeed, for some small $\varepsilon>0$, at time $\varepsilon t$, by Proposition \ref{prop:main result}, Prim's algorithm is discovering the largest percolation component of the graph percolated at $\theta^{-1}(\varepsilon)>p_c$, which is located outside of the local neighbourhood. By Proposition \ref{prop:local giant}, the largest component is of linear size when percolated at $p>p_c$, we find that only after a linear number of steps, Prim's algorithm is able to {\em return} to the local neighbourhood. 
Indeed, our results state that, in the time interval $[cn, (c+\delta)n]$, Prim's extends its search in the local neighbourhood by connecting edges with weights in between $\theta^{-1}(c)$ and $\theta^{-1}(c+\delta)$.
\smallskip

The local limit of the MST is therefore constructed in two \textit{phases}. In the \textit{first phase}, consisting of $o(n)$ steps, Prim's algorithm basically explores the invasion percolation cluster on the local limit. In the \textit{second phase}, after spending $\varepsilon n$ steps away from the root, it returns to the neighbourhood of the root, and adds edges and groups of vertices that belong to the MST.
This was also noted in \cite{Addario-Berry2013} for the complete graph.
\medskip

\paragraph{\bf Simulations}

Our results imply that there are vertices in the local neighbourhood that are explored very early by Prim's algorithm (first phase) and some that are explored very late (second phase). Such behaviour implies that it takes around $n(1-o(1))$ steps for Prim's algorithm to fully discover the local limit of the MST, as we confirm in a  simulation study.

In Figures~\ref{fig:square}, \ref{fig:triangle}, and \ref{fig:regular} we ran Prim's algorithm on large grids, triangular lattices, and $3$-regular graphs respectively.
While all figures show about 1000 vertices of the graph, the simulations were run on a larger graph extending past the boundary of the images, in order to better reproduce the effect of a {\em local limit}.
In all cases, we can see that some edges close to the root are added late in the algorithm (i.e., they are red).
Finally, in Figure~\ref{fig:grid}, we provide a representation on a much larger scale with a $2000\times2000$ grid; in this case, we once again see that late components, in red, are added in every neighbourhood around the root. 

\begin{figure}[htb]
    \centering
    \includegraphics[width=0.25\linewidth]{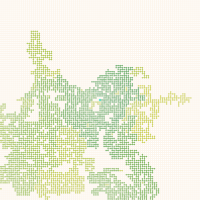}
    \includegraphics[width=0.25\linewidth]{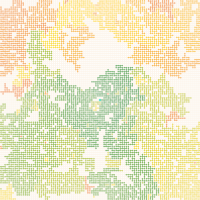}
    \includegraphics[width=0.25\linewidth]{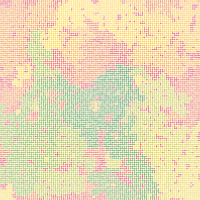}
    \caption{Simulation of Prim's algorithm on a large grid, starting from the root (in turquoise, in the middle). Vertices and edges are coloured based on the step at which Prim's algorithm adds them to the current tree (from green to yellow, and finally red) and the three images correspond to running Prim's algorithm for $n/3$, $2n/3$, and $n$ steps.}
    \label{fig:square}
\end{figure}

\begin{figure}[htb]
    \centering
    \includegraphics[width=0.25\linewidth]{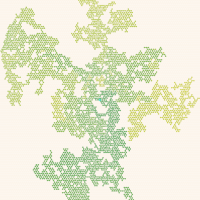}
    \includegraphics[width=0.25\linewidth]{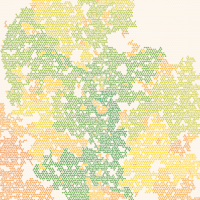}
    \includegraphics[width=0.25\linewidth]{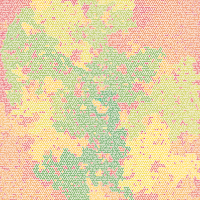}
    \caption{Simulation of Prim's algorithm on a large triangular lattice, starting from the root (in turquoise, in the middle). Vertices and edges are coloured based on the step at which Prim's algorithm adds them to the current tree (from green to yellow, and finally red) and the three images correspond to running Prim's algorithm for $n/3$, $2n/3$, and $n$ steps.}
    \label{fig:triangle}
\end{figure}

\begin{figure}[htb]
    \centering
    \includegraphics[width=0.25\linewidth]{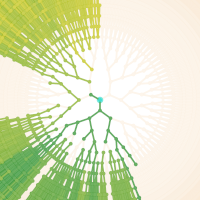}
    \includegraphics[width=0.25\linewidth]{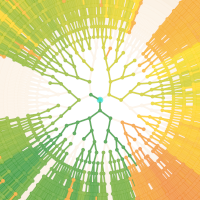}
    \includegraphics[width=0.25\linewidth]{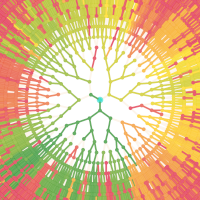}
    \caption{Simulation of Prim's algorithm on a large $3$-regular graph, starting from the root (in turquoise, in the middle). Vertices and edges are coloured based on the step at which Prim's algorithm adds them to the current tree (from green to yellow, and finally red) and the three images correspond to running Prim's algorithm for $n/3$, $2n/3$, and $n$ steps.}
    \label{fig:regular}
\end{figure}

\begin{figure}[htb]
    \centering
    \includegraphics[width=0.4\linewidth]{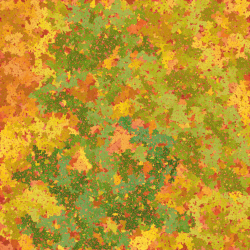}
    \caption{Simulation of Prim's algorithm on a $2000\times2000$ grid, starting from the root (in turquoise, in the middle). Each pixel corresponds to a single vertex and vertices and edges are coloured based on the step at which Prim's algorithm adds them to the current tree (from green to yellow, and finally red, with extra shaded fluctuations to highlight the local behaviour).}
    \label{fig:grid}
\end{figure}

\paragraph{\bf Addition and completion times}

We next use our results to describe the \textit{addition times} for edges in the neighbourhood of the root, and the \textit{completion time} of the neighbourhood of the root in the MST.
For $r\geq 0$, we define the {\em addition time}
\begin{align*}
    \tau_n(r, m)=\min\Big\{ k\colon \big|\prim_{k}(\rG_n)\cap B_r(\rG_n)\big|=m\Big\}
\end{align*}
to be the time that the $m^{\rm th}$ vertex in $ B_r(\rG_n)$ is added.
Additionally, for fixed $r\geq 0$, we define the {\em completion time} $C_n(r)$ by 
\begin{align*}
        C_n(r) &= \min\Big\{k: B_r\big(\prim_k(\rG_n)\big) = B_r\big(\prim_{n}(\rG_n)\big)\Big\}
\end{align*}
i.e., $C_n(r)$ denotes the first time that Prim's algorithm has discovered the complete $r$-neighbourhood of the MST. 
Our next theorem concerns the convergence in distribution of the addition and completion times:

\begin{theorem}[Limits of addition and completion times]\label{thm:addition-completion}
    Under the assumptions of Theorem \ref{thm:Prim convergence}, as $n\rightarrow \infty$, the process 
    \begin{align*}
        \left(\frac{\tau_n(r,m)}{n}, \frac{C_n(r)}{n}\right)_{m,r\geq 0}
    \end{align*}
    converges in distribution to the process of addition and completion times on the local limit $\rG$.
\end{theorem}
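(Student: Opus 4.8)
The plan is to deduce both convergences from Theorem~\ref{thm:Prim convergence} by expressing the addition and completion times as functionals of the local process $(\prim_{k_n(t)}(\rG_n))_{t\in[0,1]}$ that are, up to null events, continuous with respect to the local process convergence topology of Section~\ref{sec:lpc}. The key observation is that, for any fixed $r$, the restriction $B_r(\prim_{k}(\rG_n))$ only depends on the $r$-ball of $\rG_n$ around $o_n$, so once we have coupled $\rG_n$ with $\rG$ on balls of growing radius $r_n$ (which is built into the definition of a linearly growing sequence), the quantities $|\prim_{k_n(t)}(\rG_n)\cap B_r(\rG_n)|$ and $B_r(\prim_{k_n(t)}(\rG_n))$ converge jointly in $t$ to the corresponding quantities for $\rMSF^{\rG}_+(\theta^{-1}(t))$. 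Concretely, first I would fix $r$ and $m$, and write
\begin{align*}
    \frac{\tau_n(r,m)}{n}=\inf\Big\{t\in[0,1]:\big|\prim_{k_n(t)}(\rG_n)\cap B_r(\rG_n)\big|\geq m\Big\}+o(1),
\end{align*}
using that $k_n(t)/n\to t$ uniformly; the analogous formula holds for $C_n(r)/n$ with the event $B_r(\prim_{k_n(t)}(\rG_n))=B_r(\prim_{n}(\rG_n))$. Thus each rescaled time is a first-passage functional of the discovered-vertex-count process, respectively the ball-stabilisation process.

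The second step is to identify the limiting objects. By Theorem~\ref{thm:Prim convergence}, $t\mapsto|\prim_{k_n(t)}(\rG_n)\cap B_r(\rG_n)|$ converges (jointly over $r,m$, and jointly with $\rG_n\to\rG$, as noted in the remark after the theorem) to $t\mapsto|\rMSF^{\rG}_+(\theta^{-1}(t))\cap B_r(\rG)|$, a nondecreasing, right-continuous, integer-valued process started at $|\prim_\infty(\rG)\cap B_r(\rG)|$ and reaching its terminal value $|\rMSF^{\rG}\cap B_r(\rG)|$ at $t=1$. The limiting addition time is then $\tau(r,m)=\inf\{t:|\rMSF^{\rG}_+(\theta^{-1}(t))\cap B_r(\rG)|\geq m\}$ and the limiting completion time is $C(r)=\inf\{t:B_r(\rMSF^{\rG}_+(\theta^{-1}(t)))=B_r(\rMSF^{\rG})\}$; these are exactly the addition and completion times of the MSF on the local limit $\rG$ (run in the $\theta^{-1}$-time parametrisation), which matches the statement. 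To pass from process convergence to convergence of these first-passage times, I would invoke the standard fact that $x\mapsto\inf\{t:x(t)\geq c\}$ is continuous at every nondecreasing path $x$ that is strictly increasing through the level $c$, i.e.\ that does not stay constant exactly at value $c-$ or jump across $c$ ambiguously; here the relevant exceptional set is $\{|\rMSF^{\rG}_+(\theta^{-1}(t))\cap B_r(\rG)| \text{ has a flat stretch at height }m\text{ with an endpoint hit with positive probability}\}$, and one checks this has probability zero. This is where Assumption~\ref{ass:smooth} enters: since $t\mapsto\theta^{-1}(t)$ is continuous and strictly increasing with $\theta^{-1}(0)=p_c$, the weight threshold moves continuously, so the (at most countably many) times at which a new edge of $\rMSF^{\rG}$ enters $B_r(\rG)$ are a.s.\ continuity points of the map, and no mass concentrates at a jump location; one also uses that $\rMSF^{\rG}\cap B_r(\rG)$ is a.s.\ finite so only finitely many thresholds matter.

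The third step is to upgrade from fixed $(r,m)$ to joint convergence of the whole collection $(\tau_n(r,m)/n, C_n(r)/n)_{m,r\geq0}$. Since for each fixed $r$ there are only finitely many relevant values of $m$ (namely $m\le |\rMSF^{\rG}\cap B_r(\rG)|$, which is a.s.\ finite and itself converges), and the ball $B_r$ is monotone in $r$, convergence of all finite sub-collections follows from the joint process convergence of $(\prim_{k_n(t)}(\rG_n))_{t}$ together with $\rG_n\to\rG$ under a single coupling; convergence of the full (countable) family is then equivalent to convergence of all finite marginals, which is what we have.

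The main obstacle I expect is the boundary/ties issue in the continuous-mapping step: one must rule out, with probability one, the event that a first-passage level is attained exactly at a discontinuity of the limiting count process, or that the finite-$n$ process oscillates around level $m$ near the limiting passage time in a way not controlled by local process convergence. Handling this requires (i) showing the limiting process $t\mapsto|\rMSF^{\rG}_+(\theta^{-1}(t))\cap B_r(\rG)|$ increases by exactly $+1$ at a.s.-distinct times — this uses the continuity of $\theta^{-1}$ from Assumption~\ref{ass:smooth} to guarantee that distinct edges of $\rMSF^{\rG}$ have distinct weights a.s.\ and hence distinct entrance times — and (ii) a short argument, via the almost-sure coupling on $B_{r_n}$, that the finite-$n$ count agrees with the limiting count on $B_r$ for all $t$ simultaneously once $r_n>r$, so there is genuinely no oscillation to control beyond the $o(1)$ time-reparametrisation error. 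The completion time $C_n(r)$ needs the additional remark that, because of the second-phase description in Theorem~\ref{thm:Prim convergence} (edges of $\rMSF^{\rG}$ of ever-larger weight are added at linear times), $C(r)=\theta(w_r)$ where $w_r$ is the maximal weight of an $\rMSF^{\rG}$-edge meeting $B_r(\rG)$, which is a.s.\ a continuity point of $\theta$ by Assumption~\ref{ass:smooth}; this is what makes $C_n(r)/n$ converge rather than merely being tight.
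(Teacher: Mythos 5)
The paper does not actually write out a proof of this theorem: it only gives the informal justification in the two sentences following the displayed formula for $C_n(r)/n$. Your proposal is a fleshed-out version of exactly that argument --- represent $\tau_n(r,m)$ and $C_n(r)$ as first-passage functionals of the process in Theorem~\ref{thm:Prim convergence}, pass to the limit under the almost-sure coupling, and use the continuity and strict monotonicity of $\theta^{-1}$ from Assumption~\ref{ass:smooth} to rule out ties at the passage levels --- so the route is the intended one and the structure of the argument is sound, including your correct identification of where the continuous-mapping step could fail.

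One concrete error: your identification $C(r)=\theta(w_r)$ with $w_r$ the maximal weight of \emph{any} $\rMSF^{\rG}$-edge meeting the ball is not correct, and it disagrees with the paper's own displayed formula, which takes the maximum of $\theta(w(e))$ over $e\in E\big(B_r(\rMSF^{\rG})\big)\setminus E\big(\prim_\infty(\rG)\big)$. The edges of the invasion percolation cluster must be excluded: they are added within $o(n)$ steps, hence at rescaled time $0$, no matter how large their weight is (consider a pendant edge at the root of weight $0.9$ --- it is forced at step one, yet $\theta(0.9)$ may be close to $1$). Since $\theta$ is nondecreasing, including such an edge in the maximum can strictly increase the claimed limit, and because you use this identification to argue that the limiting completion time is a.s.\ a continuity point, the slip propagates into your conclusion for $C_n(r)/n$. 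A second, smaller slip: $C_n(r)$ is defined through the tree-balls $B_r(\prim_k(\rG_n))$, so the relevant edge set in the limit is the tree-ball $B_r(\rMSF^{\rG})$ rather than the set of $\rMSF^{\rG}$-edges meeting the graph-ball $B_r(\rG)$; relatedly, controlling $|\prim_{k_n(t)}(\rG_n)\cap B_r(\rG_n)|$ requires the joint (marked-graph) convergence with $\rG_n\to\rG$, which you do invoke, so that part is fine. Both issues are local and fixable; the rest of the argument stands.
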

Note that $\tau_n(r,m)/n$ converges to zero {\em precisely} for the edges that are reached by the invasion percolation cluster on the local limit. 
Both limits can be made more explicit using only the structure of the limit.
In particular, one can check that
\begin{align*}
    \frac{C_n(r)}{n}\underset{n\rightarrow\infty}{\longrightarrow}\max\Big\{\theta\big(w(e)\big)\colon e\in E\big(B_r(\rMSF^{\rG})\big)\setminus E\big(\prim_\infty(\rG)\big)\Big\}\,,
\end{align*}
since any edge in $\prim_\infty(\rG)$ is explored in $o(n)$ steps and all edges connected to $\prim_\infty(\rG)$ via a path of weight less than $p$ are explored in the first $\theta(p)n+o(n)$ steps of Prim's algorithm.

\section{Background work and preliminary results}\label{sec:background}

In this section we provide  further details on the definitions from Section~\ref{sec:main result}.
We start with background notations and definitions regarding graphs in Section~\ref{sec:graphs}.
We then define the minimum spanning forest, the minimum spanning tree, the invasion percolation cluster, and the expanded invasion percolation cluster in Section~\ref{sec:msf + eipc}.
In Section~\ref{sec:comp speed}, we study properties related to the levels in an infinite rooted weighted graph, more precisely, the time it takes for Prim's algorithm to reach an infinite component at level $p$, and the regularity of the survival probability.
In Section~\ref{sec:lwc}, we introduce properties of the local convergence and discuss its consequences in Section~\ref{sec:local prop}.
Finally, in Section~\ref{sec:lpc} we extend the local convergence to the local process convergence from Theorem~\ref{thm:Prim convergence}.

\subsection{Graph notations}\label{sec:graphs}

A \textit{locally finite graph} (or simply \textit{graph}) $G=(V(G),E(G))$ is a pair of sets, where $V(G)$ is the arbitrary set of \textit{vertices}, $E(G)\subseteq\binom{V(G)}{2}=\{\{u,v\}\colon u,v\in V(G)\}$ is the set of \textit{edges}, composed of (unordered) pairs of vertices, and for any $v\in V(G)$, the set of neighbors $\{u\in V(G)\colon \{u,v\}\in E(G)\}$ is finite.

For any graph $G=(V(G),E(G))$, a pair $(G,o)$ with $o\in V(G)$ is called a \textit{rooted} graph, a pair $(G,w)$ with $w\colon E(G)\rightarrow\bR$ injective (or $w\colon \mathcal{E}\rightarrow\bR$ with $E(G)\subseteq\mathcal{E}$) is called a \textit{weighted} graph, and the subsequent triplet $(G,o,w)$ is called a \textit{rooted weighted} graph. We further consider $(G,o,w)$, $((G,o),w)$, and $((G,w),o)$ to be equivalent, allowing us to directly go from rooted or weighted graphs to rooted weighted graphs. 
In these settings, $o$ is referred to as the \textit{root} and $w$ as the \textit{weights}.
We often denote weighted graphs with a curly capital letter $\wG$ and rooted weighted graphs with a double bar capital letter $\rG$.
\smallskip

\invisible{\benoit{To be removed?}
\ch{For any graph $G$, we write $V(G)$ for its vertex set and $E(G)$ for its edge set and we extend these notations to rooted, weighted and rooted weighted graphs.}}
We let the size of $G$ be its number of vertices $|G|=|V(G)|$, and say that $G$ is finite when $|G|<\infty$.
For any two graphs $G$ and $G'$, we define their union and intersection by $G\otimes G'=(V(G)\otimes V(G'),E(G)\otimes E(G'))$ where $\otimes\in\{\cup,\cap\}$.
We then say that $G$ and $G'$ intersect when $V(G\cap G')\neq\varnothing$ (note that it might have zero edges).
Finally, we extend these notations by allowing one of the two graphs to be rooted, weighted, or rooted weighted, and by letting the union or the intersection be itself rooted, weighted, or rooted weighted with the same parameters.
Observe that the union requires the weights to be defined on both edge sets and that the intersection requires the root to be part of both vertex sets, which is always the case here.
\smallskip

Given a (possibly random) finite graph $G$, the \textit{standard extension} of $G$ is the random rooted weighted graph $\rG=(G,o,w)$ where $o$ is chosen uniformly at random among $V(G)$ and $(w(e))_{e\in E(G)}$ is a family of independent $\uniform$.
Similarly, the standard extension of a rooted (not necessarily finite) graph $(G,o)$ is the random rooted weighted graph $\rG=(G,o,w)$ where $(w(e))_{e\in E(G)}$ is a family of independent $\uniform$ random variables.
In these definitions, the weights in $w$ are always independent of $(G,o)$.
\smallskip

For any weighted graph $\wG=(G,w)$, we denote the graph obtained by only keeping edges of weight less than $p$ by $\wG(p)$, that is, $\wG(p)=(G_p,w)$ with $V(G_p)=V(G)$ and $E(G_p)=\{e\in E(G)\colon w(e)\leq p\}$;
we similarly define $\rG(p)=((G,w)(p),o)$ when $\rG=(G,o,w)$ is a rooted weighted graph.
Observe that, if $G$ is a graph and $\rG$ is its standard extension, then $G_p$ as previously defined has the distribution of $G$ percolated at level $p$, however we also implicitly couple all such $(G_p)_{p\in[0,1]}$ here so that $p\mapsto G_p$ is c\`adl\`ag.
\smallskip

For any weighted graph $\wG=(G,w)$ and any $v\in V(\wG)$, we denote the connected component of $v$ in $\wG(p)$ by $\comp^{\wG}_v(p)$.
Moreover, if $\wG$ is finite, we denote the connected components of $\wG(p)$ in decreasing order of size by $\comp^{\wG}_{(1)}(p),\comp^{\wG}_{(2)}(p),\ldots$  (breaking ties arbitrarily).
We also extend these notations to rooted weighted graphs.
In both cases (weighted or rooted weighted), the components are considered as standard graphs and we drop the weights (and the root) when considering these components.
Observe that, if $G$ is a graph and $\rG$ is its standard extension, then $\comp^{\rG}_v(0)=(\{v\},\varnothing)$ and $\comp^{\rG}_v(1)=\comp^{\rG}_{(1)}(1)=G$ for any $v\in V(G)$.

\subsection{Minimum spanning forest and expanded invasion percolation cluster}\label{sec:msf + eipc}

Let $\wG=(G,w)$ be a weighted graph.
We call the graph with vertex set $V(\MSF^{\wG})=V(G)$ and edge set defined by $e=\{u,v\}\in E(\MSF^{\wG})$ if and only if the two components $\comp^{\wG\setminus\{e\}}_u(w(e))$ and $\comp^{\wG\setminus\{e\}}_v(w(e))$ are disjoint and not both infinite (where $\wG\setminus\{e\}$ denotes the graph $\wG$ where the edge $e$ is removed) the \textit{minimum spanning forest} of $\wG$, and denote it $\MSF^{\wG}$.
For a rooted weighted graph $\rG=(G,o,w)$, we let $\MSF^{\rG}=\MSF^{(G,w)}$ be the minimum spanning forest of $\rG$ and further define $\rMSF^{\rG}=(\MSF^{\rG},o,w)$ to be the rooted weighted minimum spanning forest of $\rG$.
Since this definition may yield a disconnected forest when $G$ is infinite or disconnected, and we are only interested in its local limit, we actually reduce $\rMSF^{\rG}$ to the unique tree containing the root.
We still refer to it as a \textit{forest}, although it is actually a tree.
\smallskip

In the case where $\rG$ (or $\wG$) is finite and connected, the forest $\MSF^{\rG}$ is always connected and usually referred to as the \textit{minimum spanning tree} of $\rG$, which we denote $\MST^{\rG}$ here (or $\rMST^{\rG}$ for its rooted weighted counterpart).
There exist several efficient algorithms to compute the minimum spanning tree and we now present one of them, Prim's algorithm~\cite{Prim1957}, while not providing the proof that it indeed outputs the minimum spanning tree.
\smallskip

Given a connected rooted weighted graph $\rG=(G,o,w)$, we recursively define a sequence of trees as follows.
First, we start from $T_1=(\{o\},\varnothing)$.
Then, given $T_{k-1}=(V_{k-1},E_{k-1})$ with $k\leq|\rG|$, let $\{u_k,v_k\}$ be the edge of minimal weight among all edges with exactly one end in $T_{k-1}$.
Assume without loss of generality that $u_k\in V_{k-1}$ and $v_k\notin V_{k-1}$.
Then, we let $T_k=(V_{k-1}\cup\{v_k\},E_{k-1}\cup\{u_k,v_k\})$ be the tree obtained by adding this edge to $T_{k-1}$.
This defines a sequence of growing trees such that $T_{|\rG|}$ is the minimum spanning tree of $\rG$ when the graph is finite.
If that is the case, we also let $T_k=T_{|\rG|}$ when $k\geq|\rG|$.
For any $k\geq1$, we now let $\prim_k(\rG)$ be the rooted weighted outcome of this process after $k$ steps, that is
\begin{align*}
    \prim_k(\rG)=(T_k,o,w)\,.
\end{align*}
We further extend this definition to $k=\infty$ by letting
\begin{align*}
    T_\infty=\left(\bigcup_{k\geq0}V_k,\bigcup_{k\geq0}E_k\right)
\end{align*}
and setting $\prim_\infty(\rG)=(T_\infty,o,w)$ as before.
We observe that $\prim_\infty(\rG)$ is always a subtree of $\rMSF^{\rG}$.

When $\rG$ is infinite, $\prim_\infty(\rG)$ is actually a commonly studied object, usually referred to as the \textit{invasion percolation cluster}.
For this reason, we now define the \textit{expanded invasion percolation cluster}, written as $\rMSF^{\rG}_+(p)$, to be the union of the invasion percolation cluster and the edges in the minimal spanning forest of weight at most $p$, denoted by $E(\rMSF^{\rG}(p))$, obtained by only keeping edges in $\rMSF^{\rG}$ of weight less than $p$.
More precisely, $\rMSF^{\rG}_+(p)$ is the rooted weighted graph which has the same vertex set, root, and weights as $\rG$, and with edge set defined by
\begin{align*}
    E\big(\rMSF^{\rG}_+(p)\big)=E\big(\prim_\infty(\rG)\big)\cup E\big(\rMSF^{\rG}(p)\big)\,.
\end{align*}
We again observe that the expanded invasion percolation cluster is likely not connected, and thus only consider the component containing the root, making $\rMSF^{\rG}_+(p)$ a tree again.

\subsection{Components in an infinite graph}\label{sec:comp speed}

In this section, we consider an infinite rooted weighted graph $\rG=(G,o,w)$.
Following the definition of $\theta=\theta^{\rG}$ in~\eqref{eq:theta}, we are interested in the time it takes for Prim's algorithm to reach an infinite component from the root.
More precisely, for any $p\in[0,1]$, we let $\speed(p)=\speed^{\rG}(p)$ be the random variable
\begin{align}\label{eq:speed}
    \speed(p)&=\min\Big\{k:\exists v\in V\big(\prim_k(\rG)\big),\big|\comp_v(p)\big|=\infty\Big\}-1\,.
\end{align}
From this definition, we observe that $\speed(p)=\infty$ when Prim's algorithm {\em never} explores an infinite component at percolation level $p$, and $\speed(p)=0$ is equivalent to the root $o$ already belonging to an infinite component at percolation level $p$.
In particular, it follows that $p\mapsto\speed(p)$ is decreasing (under the coupling of $\rG(p)$ using its edge weights) with $\speed(0)=\infty$ and $\speed(1)$ equals $0$ if $\rG$ is infinite, $\infty$ otherwise.
Finally, when $\speed(p)<\infty$, we let $v(p)=v^{\rG}(p)$ be the $(\speed(p)+1)$-th vertex explored during Prim's algorithm and we note that this implies that $|\comp^{\rG}_{v(p)}(p)|=\infty$;
in the case $\speed(p)=\infty$, for convenience, we let $v(p)=\infty$ be a ``boundary'' vertex and define $\comp^{\rG}_{v(p)}(p)=(\varnothing,\varnothing)$ to be the empty graph.

In order to see the importance of $\speed(p)$, we start with a result showing the relation between its value and Assumption~\ref{ass:speed}:

\begin{proposition}[Infinite components reached by Prim]\label{prop:speed}
    Let $\rG$ be a rooted weighted graph.
    Then, for any $p\in[0,1]$,
    \begin{align*}
        \speed(p)=\infty~\Longleftrightarrow~\forall v\in\rMSF^{\rG}:\big|\comp^{\rG}_v(p)\big|<\infty\,.
    \end{align*}
    In words, Prim's algorithm never reaches an infinite component at level $p$ if and only if all components at level $p$ reachable from $o$ in the minimum spanning forest have finite sizes.
\end{proposition}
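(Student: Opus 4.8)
The plan is to prove both implications separately, using the fact that $\prim_\infty(\rG)$ is always a subtree of $\rMSF^{\rG}$ together with the characterization of $\rMSF^{\rG}$ in terms of components at level $p$.

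For the direction ``$\speed(p)=\infty \Rightarrow$ all $\comp^{\rG}_v(p)$ with $v\in\rMSF^{\rG}$ are finite'', I would argue by contraposition. Suppose some $v\in V(\rMSF^{\rG})$ has $|\comp^{\rG}_v(p)|=\infty$. Since $v$ lies in the tree $\rMSF^{\rG}$ (which by convention is the unique tree containing the root $\root$), there is a finite path $\root = x_0, x_1, \ldots, x_m = v$ in $\rMSF^{\rG}$. The goal is to show Prim's algorithm, run on $\rG$, eventually reaches $v$ (or some other vertex in an infinite level-$p$ component), so $\speed(p)<\infty$. The natural way is to show, by induction along the path, that every edge of $\rMSF^{\rG}$ incident to the current Prim tree is eventually added; once $v$ is reached, $\comp^{\rG}_v(p)$ is an infinite component, giving $\speed(p)<\infty$. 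The key sub-claim is: if $T_k = \prim_k(\rG)$ and $e=\{u,u'\}\in E(\rMSF^{\rG})$ with $u\in V(T_k)$, then $e$ is eventually added by Prim. This should follow from the defining property of the minimum spanning forest: if $e=\{u,u'\}\in E(\rMSF^{\rG})$ then $\comp^{\rG\setminus\{e\}}_u(w(e))$ and $\comp^{\rG\setminus\{e\}}_{u'}(w(e))$ are disjoint and not both infinite — so one of the two sides, when restricted to weights $< w(e)$, is finite, and Prim cannot ``escape'' that finite side without eventually taking a cheaper edge or the edge $e$ itself; combined with the fact that Prim adds edges in a manner consistent with weights, a careful argument shows $e$ is taken. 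I would formalize this: consider the first time Prim has exhausted all edges of weight $< w(e)$ available from the finite side; at that moment $e$ is the minimal-weight boundary edge (or a path along $\rMSF^{\rG}$ of smaller weight reaches $u'$ first, which still makes progress).

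For the converse direction ``all $\comp^{\rG}_v(p)$ with $v\in\rMSF^{\rG}$ finite $\Rightarrow \speed(p)=\infty$'', I would argue directly: suppose $\speed(p)<\infty$, so there is $k$ and $v\in V(\prim_k(\rG))$ with $|\comp^{\rG}_v(p)|=\infty$. Since $\prim_\infty(\rG)\subseteq \rMSF^{\rG}$ and $v\in V(\prim_k(\rG))\subseteq V(\prim_\infty(\rG))\subseteq V(\rMSF^{\rG})$, this $v$ is a vertex of $\rMSF^{\rG}$ with infinite level-$p$ component, contradicting the hypothesis. This direction is essentially immediate.

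The main obstacle is the sub-claim in the first direction: that Prim's algorithm, run indefinitely on the infinite graph $\rG$, actually traverses every edge of the minimum spanning forest reachable from the root, rather than wandering off into an infinite component along some other route and never returning. I would need to use the structural characterization of $\MSF^{\rG}$ carefully — in particular, that an edge $e\in E(\rMSF^{\rG})$ separates the graph (at level $w(e)$) into pieces not both infinite — to rule out Prim bypassing $e$. I expect the cleanest formulation is: order the edges of $\rMSF^{\rG}$ along the path from $\root$ to $v$ by the time Prim adds them, and show inductively that the relevant finite side forces each successive edge to be selected in finite time (possibly after detours into finite components of strictly smaller weight). It may also be cleaner to invoke the invasion percolation description of $\prim_\infty(\rG)$ directly, noting that $\prim_\infty(\rG)$ spans every finite level-$w(e)$ component it meets, so the boundary edge $e$ of minimal weight is eventually added.
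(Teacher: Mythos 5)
Your overall decomposition matches the paper's: the implication ``all components finite $\Rightarrow\speed(p)=\infty$'' is immediate from the fact that Prim's tree is contained in $\rMSF^{\rG}$, and all the content lies in the converse. You correctly flag the converse as the main obstacle, but the sub-claim you propose to resolve it with --- that every edge $e=\{u,u'\}\in E(\rMSF^{\rG})$ with $u$ in the current Prim tree is eventually added --- is false, and no amount of care with the MSF characterisation will rescue it. If it were true it would give $\prim_\infty(\rG)=\rMSF^{\rG}$, whereas the invasion percolation cluster is in general a \emph{proper} subtree of the minimum spanning forest; that gap is precisely what the paper's object $\rMSF^{\rG}_+(p)$ interpolates across. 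Concretely, the defining property of $\MSF^{\rG}$ only says that the two sides of $e$ at level $w(e)$ are not \emph{both} infinite; it can perfectly well be the side containing $u$ (hence Prim's tree) that is infinite, in which case Prim may enter that infinite side through a cheap edge and never again need an edge of weight at least $w(e)$, so $e$ is never taken (think of a root with an edge of weight $0.5$ to a leaf and an edge of weight $0.3$ into an infinite subtree that percolates below level $0.5$). Your ``finite side'' argument silently assumes the finite side is the one Prim is sitting in.

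The fix --- which is what the paper does, and which you only gesture at in your final sentence --- is not to cross the $\rMSF^{\rG}$-path edge by edge in path order, but to first cross the edge $e$ of \emph{maximal} weight on the path from $\root$ to the union of infinite level-$p$ components. For that particular edge one can certify which side is finite: since $\root$ itself does not lie in an infinite level-$p$ component, one gets $w(e)>p$, so the far side of $e$ at level $w(e)$ contains the infinite level-$p$ component (all remaining path edges are lighter than $w(e)$) and is therefore infinite; the MSF property then forces the near side, the one containing $\root$, to be finite. Invasion percolation exhausts that finite near side using edges of weight below $w(e)$ and must then take its minimal-weight boundary edge, which is $e$ itself (every other boundary edge has weight exceeding $w(e)$, since a lighter edge out of the near side would keep you inside it). Iterating on the remaining, strictly shorter, path reaches the infinite component in finitely many steps and gives $\speed(p)<\infty$. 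Without this selection of the bottleneck edge, your induction has no way to guarantee that the ``relevant finite side'' is the one containing Prim's tree, and the step fails.
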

Assumption \ref{ass:speed} states that the event on the right-hand side has probability zero when $\theta(p)>0$. Thus, in this case, we obtain that $\speed(p)<\infty$ a.s.

\begin{proof}
    Note first that $\comp^{\rG}_v(p)\cap\MSF^{\rG}=\MSF^{\rG(p)}$ so that $|\comp^{\rG}_v(p)|=|\comp^{\rMSF^{\rG}}_v(p)|$ (if they are both infinite, they are not necessarily equal as $\rMSF^{\rG}$ is only the connected component of the root).
    We now observe that the vertices and edges found by Prim's algorithm starting from $o$ in $\rG$ are all in $\rMSF^{\rG}$.
    \smallskip
    
    From this observation, the left implication directly follows, since there is no infinite component for Prim's algorithm to encounter.
    For the converse implication, assume that $\rMSF^{\rG}$ has infinite components at level $p$ and let $\rMSF^\infty$ be their union.
    We now show that Prim's algorithm eventually reaches $\rMSF^\infty$.

    If $o\in\rMSF^\infty$, then it is trivially true, so we now assume that $o\notin\rMSF^\infty$.
    Let $o=v_0,\ldots,v_k$ be a shortest path from $o$ to $\rMSF^\infty$ and let $e=\{v_i,v_{i+1}\}$ be the edge of maximal weight in this path.
    Since we know that $o\notin\rMSF^\infty$ and $e$ is the edge of maximal weight, it follows that $w(e)>p$.
    But then, in that case $\comp^{\rG}_{v_{i+1}}(w(e))$ contains the infinite component at level $p$ included in $\rMSF^\infty$, so it is infinite.
    By the definition of the minimum spanning forest from Section~\ref{sec:graphs}, this implies that $|\comp^{\rG}_{v_i}(w(e))|=|\comp^{\rG}_o(w(e))|<\infty$, and so Prim's algorithm eventually visits the edge $e$.
    Using this argument inductively, we see that Prim's algorithm eventually reaches $\rMSF^\infty$, as desired.
\end{proof}

We now use $\speed(p)$ to characterise the expanded invasion percolation cluster, and highlight its relation to the invasion percolation cluster (IPC) and Prim's algorithm:


\begin{lemma}[Decomposition of expanded IPC]\label{lem:eipc struct}
    Let $\rG$ be a rooted weighted graph, $\rMSF^{\rG}$ its rooted weighted minimum spanning forest, and $\rMSF^{\rG}_+(p)$ its expanded invasion percolation cluster as defined in Section~\ref{sec:msf + eipc}.
    Further define $\speed(p)=\speed^{\rG}(p)$ as in~\eqref{eq:speed} and $v(p)=v^{\rG}(p)$ as the $(\speed(p)+1)$-th vertex explored in Prim's algorithm.
    Then, for any $p\in[0,1]$,
    \begin{align*}
        \rMSF^{\rG}_+(p)&=\prim_{\speed(p)+1}(\rG)\cup\left(\MSF^{\rG}\cap\comp^{\rG}_{v(p)}(p)\right)\,,
    \end{align*}
    where, when $\speed(p)=\infty$, this formula equals
    \begin{align*}
        \rMSF^{\rG}_+(p)&=\prim_\infty(\rG).
    \end{align*}
\end{lemma}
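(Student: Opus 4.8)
The plan is to prove the two set equalities of edge sets directly, treating the case $\speed(p)<\infty$ as the main one, since the case $\speed(p)=\infty$ follows immediately: there $\comp^{\rG}_{v(p)}(p)=(\varnothing,\varnothing)$ by the convention fixed in Section~\ref{sec:comp speed}, and $E(\rMSF^{\rG}(p))\subseteq E(\prim_\infty(\rG))$ by Proposition~\ref{prop:speed} (all components at level $p$ reachable from $o$ in $\rMSF^{\rG}$ are finite, so every edge of $\rMSF^{\rG}$ of weight $\le p$ lies in a finite level-$p$ component that Prim's algorithm must eventually absorb once it has connected to it through higher-weight edges — more precisely each such edge is already in $\prim_\infty(\rG)$). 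Combined with $\prim_{\speed(p)+1}(\rG)=\prim_\infty(\rG)$ when $\speed(p)=\infty$, this gives the stated degenerate formula.

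So assume $\speed(p)<\infty$ and write $k=\speed(p)+1$, $v=v(p)$, so that $|\comp^{\rG}_v(p)|=\infty$ and $v$ is the $k$-th vertex explored. First I would show the inclusion ``$\supseteq$''. Every edge of $\prim_k(\rG)$ is an edge of $\prim_\infty(\rG)\subseteq \rMSF^{\rG}_+(p)$, so that part is free. For an edge $e\in E(\MSF^{\rG})\cap E(\comp^{\rG}_v(p))$: it has weight $\le p$ and lies in $\rMSF^{\rG}$, hence $e\in E(\rMSF^{\rG}(p))\subseteq E(\rMSF^{\rG}_+(p))$. This settles ``$\supseteq$''. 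For ``$\subseteq$'', the content is to show $E(\rMSF^{\rG}(p))\subseteq E(\prim_k(\rG))\cup E(\comp^{\rG}_v(p))$ (the $E(\prim_\infty(\rG))$ part of the union is handled below). Take $e=\{a,b\}\in E(\MSF^{\rG})$ with $w(e)\le p$. Using the identity from the proof of Proposition~\ref{prop:speed}, $|\comp^{\rG}_u(p)|=|\comp^{\rMSF^{\rG}}_u(p)|$ for every $u$, so the level-$p$ component of $\rMSF^{\rG}$ containing $e$ is either the infinite component $\comp^{\rG}_v(p)$ — in which case $e\in E(\comp^{\rG}_v(p))$ and we are done — or a finite level-$p$ component $C$ of $\rMSF^{\rG}$. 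In the latter case I must argue $e\in E(\prim_k(\rG))$, i.e. both endpoints (and the connecting edge) are discovered within the first $k=\speed(p)+1$ steps.

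The main obstacle, then, is controlling the finite level-$p$ components of $\rMSF^{\rG}$ that are \emph{not} the one containing the root's eventual route to infinity: why must Prim discover all of them before it first touches an infinite level-$p$ component? The key structural fact I would use is that Prim's algorithm, before step $k$, has only ever added edges of weight at most the weight of the edge that connects it to $v$, and more to the point, that whenever Prim is about to leave a finite level-$p$ component $C$ of $\rMSF^{\rG}$ it reaches, it must first have absorbed all of $C$: indeed, if Prim's current tree $T$ intersects $C$ but $C\not\subseteq V(T)$, then there is an edge $e'=\{x,y\}$ of $\MSF^{\rG}$ with $x\in V(T)\cap C$, $y\in C\setminus V(T)$, $w(e')\le p$, and this $e'$ is a candidate edge on the boundary of $T$ of weight $\le p$; since $v$ is reached via an edge from a vertex not yet in an infinite component — and one checks that the boundary edge used to reach $v$ has weight $> p$ whenever $o$ is not itself in an infinite level-$p$ component, by the same maximal-weight-on-shortest-path argument as in Proposition~\ref{prop:speed} — Prim would prefer the weight-$\le p$ edge $e'$ over any edge of weight $>p$. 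Hence every finite level-$p$ component of $\rMSF^{\rG}$ that Prim touches before step $k$ is fully absorbed before step $k$, which forces $e\in E(\prim_k(\rG))$ and also shows the inclusion $E(\prim_\infty(\rG))\subseteq E(\prim_k(\rG))\cup E(\comp^{\rG}_v(p))$ needed to finish ``$\subseteq$'': any edge of $\prim_\infty(\rG)$ added after step $k$ lies in $\comp^{\rG}_v(p)$ once Prim has entered the infinite component. Assembling the two inclusions gives the claimed identity; I would write the finite-component absorption claim as a short separate paragraph since it is the crux, and keep the rest as bookkeeping.
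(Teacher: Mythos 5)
Your overall strategy --- reduce everything to the claim that Prim fully absorbs any finite level-$p$ component before it leaves it, and in particular before it first enters an infinite one --- is sound, and that absorption claim is correct. It is essentially the same mechanism the paper exploits, only packaged differently: the paper argues on boundary edges, showing that any edge of $\rMSF^{\rG}$ leaving the candidate tree must have weight $>p$ because otherwise Prim would have preferred it to the weight-$>p$ edge leading to $v(p)$. Your ``$\supseteq$'' direction is fine.

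The gap is in ``$\subseteq$''. The inclusion you set out to prove, $E(\rMSF^{\rG}(p))\subseteq E(\prim_k(\rG))\cup E(\comp^{\rG}_{v(p)}(p))$, is false as literally stated: $\rMSF^{\rG}$ may contain a finite level-$p$ component that Prim never touches (one reachable from $o$ only through weight-$>p$ MSF edges that Prim never traverses), or even a second infinite level-$p$ component distinct from $\comp^{\rG}_{v(p)}(p)$ --- your dichotomy ``either the infinite component $\comp^{\rG}_{v(p)}(p)$ or a finite component'' silently excludes the latter. Weight-$\le p$ MSF edges in such components belong to $E(\rMSF^{\rG}(p))$ but to neither set on the right, and your key step ``\ldots every finite level-$p$ component that Prim touches before step $k$ is fully absorbed, which forces $e\in E(\prim_k(\rG))$'' is a non-sequitur for an arbitrary $e$, since nothing guarantees Prim touches the component of $e$. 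The lemma is saved only because $\rMSF^{\rG}_+(p)$ is, by definition, cut down to the connected component of the root, and you never invoke that restriction. To close the argument you would need to add: (i) every weight-$\le p$ MSF edge in the \emph{root component} of $\rMSF^{\rG}_+(p)$ lies in a level-$p$ component meeting $\prim_\infty(\rG)$ (finite level-$p$ components attach to the rest of $\rMSF^{\rG}_+(p)$ only through edges of $\prim_\infty(\rG)$); and (ii) any such finite component is met strictly before step $k$, since after step $k$ Prim never leaves $\comp^{\rG}_{v(p)}(p)$ --- at which point your absorption claim applies. The same issue affects your degenerate case, where $E(\rMSF^{\rG}(p))\subseteq E(\prim_\infty(\rG))$ again fails for untouched components. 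Rewriting the ``$\subseteq$'' step in the paper's boundary-edge form, or adding the connectivity bookkeeping (i)--(ii), repairs the proof.
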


\begin{proof}
    Consider first the case $\speed(p)=\infty$.
    We already know from its definition in Section~\ref{sec:msf + eipc} that $\prim_\infty(\rG)$ is a subtree of $\rMSF^{\rG}_+(p)$.
    For the other inclusion, let $e=\{u,v\}$ be an edge on the boundary of $\prim_\infty(\rG)$ in $\rMSF^{\rG}$, so that $u$ is explored by Prim's algorithm, while $v$ is not.
    Using the definition of Prim's algorithm, we observe that $\comp^{\rG\setminus\{e\}}_u(w(e))$ contains an infinite component of $\prim_\infty(\rG)$ and so is infinite.
    Using the result from Proposition~\ref{prop:speed}, we see that this implies that $w(e)>p$ and so $e\notin E(\rMSF_+(p))$ as desired.
    
    Next, consider the case where $\speed(p)<\infty$.
    The left inclusion once again directly follows from the definition of $\rMSF^{\rG}_+(p)$, so we focus on the right inclusion.
    First of all, since Prim's algorithm reaches the component $\comp^{\rG}_{v(p)}(p)$ after $\speed(p)+1$ steps and this component is infinite, it remains within it for the rest of the time.
    Using that Prim's algorithm is a subtree of the minimum spanning forest, we see that
    \begin{align*}
        \prim_{\infty}(\rG)\subseteq\prim_{\speed(p)+1}(\rG)\cup\left(\MSF^{\rG}\cup\comp^{\rG}_{v(p)}(p)\right)\,.
    \end{align*}
    Write $T$ for the tree on the right-hand side of the previous equation (i.e., the target object from the lemma) and consider an edge $e=\{u,v\}$ from $\rMSF^{\rG}$ such that $u$ is in $T$ and $v$ is not.
    Then either $u\in\comp^{\rG}_{v(p)}(p)$ and since $v$ does not belong to $T$, it follows that $w(e)>p$ so $e$ does not belong to $\rMSF^{\rG}_+(p)$.
    Otherwise, $u\notin\comp^{\rG}_{v(p)}(p)$ and so $\speed(p)\geq1$.
    In that case, there is an edge between $o$ and $\comp^{\rG}_{v(p)}(p)$ in $\rMSF^{\rG}$ whose weight is larger than $p$.
    But then, the edge $e$ has to be heavier than $p$, otherwise it would have been incorporated in Prim's algorithm.
    In both cases, we observe that $w(e)>p$ and so $e$ does not belong to $\rMSF^{\rG}_+(p)$, proving the second inclusion.
\end{proof}

\subsection{Local limits}\label{sec:lwc}

For any $r\geq0$ and any rooted graph $(G,o)$, the ball of radius $r$ of $(G,o)$ denoted $B_r(G,o)$ is the rooted subgraph of $G$ composed of all vertices that can be reach from $o$ using only $r$ edges;
for example, $B_0(G,o)=((\{o\},\varnothing),o)$ and $B_1(G,o)$ is defined by $V(B_1(G,o))=\{o\}\cup N$, where $N$ is the set of neighbours of $o$ in $G$, and
\begin{align*}
    E\big(B_1(G,o)\big)=\Big\{\{o,v\}:v\in N\Big\}\cup\left(E(G)\cap\binom{N}{2}\right)\,.
\end{align*}
For a rooted weighted graph $\rG=(G,o,w)$, we extend the notations and denote $B_r(\rG)=(B_r(G,o),w)$ for the rooted weighted subgraph of $\rG$ composed of vertices at distance $r$ from $o$.
\smallskip

Two graphs $G$ and $G'$ are said to be \textit{isomorphic} if there exists a bijection $\varphi\colon V(G)\rightarrow V(G')$ such that $e=\{u,v\}\in E(G)$ if and only if $\varphi(e)=\{\varphi(u),\varphi(v)\}\in E(G')$.
Two rooted graphs $(G,o)$ and $(G',o')$ are said to be isomorphic if $G$ and $G'$ are isomorphic and the previous bijection $\varphi$ can be chosen to satisfy $\varphi(o)=o'$.
Finally, while this definition could directly be extended to rooted weighted graphs, since the weights considered here are continuous, we prefer using an approximation, further explained below.
\smallskip

Consider two rooted weighted graphs $\rG=(G,o,w)$ and $\rG'=(G',o',w')$.
Then, $\varphi$ is said to be a $\varepsilon$-isomorphism between $\rG$ and $\rG'$ if $\varphi$ satisfies the previous isomorphic condition between $(G,o)$ and $(G',o')$ and
\begin{align*}
    \big|w(e)-w'(\varphi(e))\big|\leq\varepsilon
\end{align*}
for any $e\in E(G)$.
We then write $\rG\equiv_r\rG'$ if there exists a $1/r$-isomorphism between $B_r(\rG)$ and $B_r(\rG')$.
If that is the case, this means that the unweighted versions of $\rG$ and $\rG'$ are isomorphic on a ball of radius $r$ and that such isomorphism can be chosen so that edges are mapped to edges with weights that are at most $1/r$ apart.
We also naturally extend this notation to unweighted graphs by saying that $(G,o)\equiv_r(G',o')$ if and only if $(G,o,\mathbf{1})\equiv_r(G',o',\mathbf{1})$, where $\mathbf{1}$ is the constant function equal to $1$.
\smallskip

For two rooted weighted graphs $\rG$ and $\rG'$, define their distance to be $1/(r+1)$, where $r$ is the maximal radius on which they are equivalent, that is
\begin{align}\label{eq:def local metric}
    d(\rG,\rG')&=\left(1+\sup\Big\{r\geq0:\rG\equiv_r\rG'\Big\}\right)^{-1}\,.
\end{align}
Then, the metric space of rooted weighted graphs (up to isomorphisms), endowed with the distance $d$, is a Polish space.
Indeed, both the space of rooted graphs endowed with $d$ (by giving constant weights equal to $1$) and $[0,1]$ endowed with the distance in absolute value are Polish, and so it is not difficult to verify that the space of rooted weighted graphs is Polish.
Thus, we can consider convergence, random variables, and random processes on it.
In particular, this allows us to define the standard local weak topology, as introduced in~\cite{Benjamini2001}, as follows.
\smallskip

A sequence of (random) weighted graphs $(\wG_n)_{n\geq1}$ converges \textit{locally weakly} to the rooted weighted graph $\rG$ if, for any bounded continuous function $h$ with respect to $d$,
\begin{align*}
    \expec\big[h(\wG_n,o_n)\big]\longrightarrow\expec[h(\rG)]\,,
\end{align*}
where $o_n$ is a vertex from $V(\wG_n)$ chosen uniformly at random.
When that is the case, we denote it by $\wG_n\lwc\rG$.
We further extend this notation to allow the convergence to occur not only in expectation, but also in probability, i.e., for random graphs $\wG_n$, 
\begin{equation*}
    \expec\big[h(\wG_n,o_n)\mid \wG_n\big]\stackrel{\scriptscriptstyle\prob}{\longrightarrow}\expec[h(\rG)],
\end{equation*}
where now the expectation is with respect to the random root only (i.e., now with respect to the randomness of the graph).
When that is the case, we say that $(\wG_n)_{n\geq1}$ converges \textit{locally in probability} towards $\rG$ and denote it by $\wG_n\lcp\rG$.
Similarly, we extend these two notations to rooted weighted graphs (that is, $\rG_n\lwc\rG$ and $\rG_n\lcp\rG$), where it is always the case that the root is chosen uniformly at random among the vertex set of $G_n$. We also later apply a function on $\rG_n$ (namely, Prim's algorithm) but the root is always chosen in advance, and uniformly in $V(G_n)$.
Finally, when considering unweighted graphs, we simply add constant edge weights equal to $1$, and use the same notations and terminology.
Interestingly, the two notions of weighted and unweighted local weak convergence are strongly related, as specified by the following lemma:

\begin{lemma}[Local convergence of random graphs with i.i.d.\ weights]\label{lem:lwc equiv}
    Let $(G_n)_{n\geq1}$ be a sequence of graphs and $(G,o)$ a rooted graph.
    Further endow these graphs with independent and identically distributed edge weights to obtain $(\wG_n)_{n\geq1}=(G_n,w_n)_{n\geq1}$ and $\rG=(G,o,w)$, where the weights have the same distributions.
    Then weighted and unweighted local weak convergence are equivalent, that is
    \begin{align*}
        G_n\lwc(G,o)~\Longleftrightarrow~\wG_n\lwc\rG\,.
    \end{align*}
\end{lemma}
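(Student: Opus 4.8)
The plan is to reduce weighted local weak convergence to unweighted local weak convergence by a conditioning/approximation argument, exploiting that the edge weights are i.i.d.\ and independent of the graph. For the forward implication, I would start from a bounded continuous test function $h$ on rooted weighted graphs and show that $g(G,o):=\expec[h(G,o,w)\mid (G,o)]$ is a bounded continuous function on \emph{unweighted} rooted graphs; then $G_n\lwc(G,o)$ gives $\expec[g(G_n,o_n)]\to\expec[g(G,o)]$, and since the weights are attached i.i.d.\ and independently of the graph, the left-hand side equals $\expec[h(\wG_n,o_n)]$ and the right-hand side equals $\expec[h(\rG)]$, which is exactly $\wG_n\lwc\rG$. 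Continuity of $g$ in the local metric on unweighted graphs is the only real point: since $h$ is continuous and bounded with respect to $d$ (defined in~\eqref{eq:def local metric}), and two unweighted rooted graphs that agree on a ball of radius $r$ produce, after attaching the \emph{same} i.i.d.\ weight sequence edge-by-edge under a radius-$r$ isomorphism, two weighted rooted graphs that are $0$-isomorphic on that ball (hence at $d$-distance $\le 1/(r+1)$), one gets $|g(G,o)-g(G',o')|$ small by dominated convergence when $(G,o)\equiv_r(G',o')$ with $r$ large. A clean way to organise this is: fix $\delta>0$, pick $r$ so that $\mathrm{diam}$-type continuity of $h$ forces $|h(\rG_1)-h(\rG_2)|\le\delta$ whenever $d(\rG_1,\rG_2)\le 1/(r+1)$, and note that the coupling of weights makes this event certain once the unweighted $r$-balls agree.

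For the reverse implication, I would argue that unweighted local weak convergence is implied by weighted local weak convergence because the unweighted ball structure is a (measurable, and in fact continuous off a null set) function of the weighted one. Concretely, given a bounded continuous test function $f$ on unweighted rooted graphs, define $H(\rG)=f(G,o)$ (forgetting the weights); this $H$ is bounded and is continuous on the set of weighted rooted graphs whose weights are all distinct and nonzero — which, since the weights are i.i.d.\ continuous (here $\uniform$, or any atomless law; if the common law has atoms one first notes $G_n\lwc(G,o)$ is recovered by replacing $w$ with an independent atomless perturbation, or simply uses that $f$ depends only on the unweighted structure and the convergence~$\wG_n\lwc\rG$ passes to the pushforward under the projection map, which is Borel), has full probability under both $\rG$ and $\rG_n$. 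Then $\expec[H(\wG_n,o_n)]\to\expec[H(\rG)]$ by weighted local weak convergence (applied to a continuous bounded function, using that the discontinuity set is null for the limit so the portmanteau-type argument goes through), i.e.\ $\expec[f(G_n,o_n)]\to\expec[f(G,o)]$, which is $G_n\lwc(G,o)$.

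The main obstacle is the reverse direction: the forgetful map from weighted to unweighted rooted graphs is not globally continuous in the local metric $d$ (a tiny weight perturbation can never change the unweighted structure, so it \emph{is} continuous there — the subtlety is rather that convergence of weighted balls under $\equiv_r$ already forces the unweighted $r$-balls to be isomorphic, so $H$ is in fact continuous everywhere in $d$). Thus the genuine care needed is just in the forward direction's continuity-of-$g$ step and in making the coupling "attach the same i.i.d.\ sequence along the isomorphism" precise; both are routine given that the metric $d$ in~\eqref{eq:def local metric} only penalises weights up to additive error. I would therefore expect the write-up to be short: state the coupling, verify $g$ (resp.\ $H$) is bounded continuous, and invoke the definition of $\lwc$ twice. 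One should also remark that the same argument yields the "in probability" versions $G_n\lcp(G,o)\Leftrightarrow\wG_n\lcp\rG$, since the conditioning there is on the graph alone and the weight randomness is independent, so the displayed conditional expectations transform identically.
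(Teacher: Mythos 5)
Your argument is correct in substance, but it takes a genuinely different route from the paper. The paper gives a direct computation on the basis events of the local topology: for a fixed rooted weighted graph $\rH$ it factorises $\prob(\rG\equiv_r\rH)$ into the unweighted isomorphism probability times a weight factor $P(\rH)=\prod_{e}\prob(|W-\xi(e)|\leq 1/r)$, observes that the same factor appears for every $(\wG_n,v)$, and then invokes \cite[Theorem 2.16]{VanderHofstad2024} to restrict to $\rH$ with $P(\rH)>0$; this yields both implications simultaneously from a single identity. You instead argue with general bounded continuous test functions: in the forward direction you integrate out the weights to get $g(G,o)=\expec[h(G,o,w)\mid(G,o)]$ and verify its continuity by a coupling of the weights along the $r$-ball isomorphism, and in the reverse direction you compose with the forgetful map, correctly resolving your own worry by noting that $\equiv_r$ for weighted graphs already forces $\equiv_r$ for the unweighted ones. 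The paper explicitly remarks that such a coupling argument is an admissible alternative. Your version is more robust (it does not rely on the product factorisation, which is tailored to the specific form of $\equiv_r$, and it works verbatim for any weight law), at the price of handling two directions separately. One small repair is needed in your continuity step: a bounded continuous $h$ on this non-compact Polish space need not be uniformly continuous, so you cannot choose a single $r$ making $|h(\rG_1)-h(\rG_2)|\leq\delta$ whenever $d(\rG_1,\rG_2)\leq 1/(r+1)$. Either reduce to bounded uniformly continuous test functions (which suffice for weak convergence by the portmanteau theorem), or bound $|g(G_k',o_k')-g(G',o')|$ by $\expec[\omega_{1/(r_k+1)}(\rG)]$ with $\omega_\delta$ the local modulus of continuity of $h$ at the limiting weighted graph, and conclude by dominated convergence. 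Your closing remark that the same argument transfers to the in-probability version is accurate, since the weight randomness is independent of the graph.
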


\begin{proof} We give a direct proof, but this can also be proved using a coupling argument instead.
    Fix $r\geq0$ and a rooted weighted graph $\rH=(H,\rho,\xi)$.
    Further write $W$ for a random variable distributed according to the common weight distribution.
    Then, given the random graph $\rG=(G,o,w)$ and using the definition of $\equiv_r$, the event $\{\rG\equiv_r\rH\}$ can be split according to the existence of the isomorphism and the weights on this isomorphism being close to each other:
    \begin{align*}
        \prob\Big(\rG\equiv_r\rH\Big)&=\prob\Big(\rG\equiv_r\rH~\Big|~(G,o)\equiv_r(H,\rho)\Big)\prob\Big((G,o)\equiv_r(H,\rho)\Big)\\
        &=\left(\prod_{e\in E(B_r(\rH))}\prob\left(\big|W-\xi(e)\big|\leq\frac{1}{r}\right)\right)\prob\Big((G,o)\equiv_r(H,\rho)\Big)\,.
    \end{align*}
    Denote the product of probabilities in the last equation by $P(\rH)$.
    The same argument as before can then be applied to $(\wG_n,v)$ for any $n\geq1$ and $v\in V(\wG_n)$, leading to
    \begin{align*}
        &\frac{1}{|\wG_n|}\sum_{v\in V(\wG_n)}\prob\Big((\wG_n,v)\equiv_r\rH\Big)-\prob\Big(\rG\equiv_r\rH\Big)\\
        &=P(\rH)\cdot\left(\frac{1}{|G_n|}\sum_{v\in V(G_n)}\prob\Big((G_n,v)\equiv_r(H,\rho)\Big)-\prob\Big((G,o)\equiv_r(H,\rho)\Big)\right)\,.
    \end{align*}
    To conclude this proof, use~\cite[Theorem 2.16]{VanderHofstad2024} to restrict our study to $\rH$ such that $P(\rH)>0$, thus proving that the left-hand side converges to $0$ if and only if the right-hand side does as well.
    It now suffices to notice that the left-, respectively, right-hand sides converge to $0$ for all such $\rH$ if and only if the weighted, respectively unweighted, graph sequence converges, which is exactly the desired statement.
\end{proof}

\subsection{Properties of the local limit}\label{sec:local prop}

In this section we explore some of the known properties regarding the local limits of graph sequences.
As a first step, and because it is an essential part of our work, we provide the following theorem stating that the local limit of the minimum spanning tree is the minimum spanning forest:

\begin{theorem}[{\cite[Theorem~5.4]{Aldous2004}}]\label{thm:mst and msf}
    Let $(\wG_n)_{n\geq1}$ be a sequence of weighted graphs which converges locally weakly towards $\rG$.
    Then, under the standard product topology,
    \begin{align*}
        \big(\wG_n,\MST^{\wG_n}\big)\lwc\big(\rG,\MSF^{\rG}\big)\,.
    \end{align*}
\end{theorem}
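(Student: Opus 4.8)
The plan is to regard the pair $(\wG_n,\MST^{\wG_n})$ as a graph whose edges carry two marks, namely their weight and the indicator of lying in $\MST^{\wG_n}$, and to prove that this edge‑marked graph converges locally weakly to $\rG$ equipped with the marks ``weight'' and ``indicator of lying in $\MSF^{\rG}$''. Since $\wG_n\lwc\rG$ already delivers the weights, the task reduces to transferring the $\MST$‑indicators, and by the definition of the local metric~$d$ it suffices to show, for each fixed $r\ge1$, that the random rooted weighted ball $B_r(\wG_n,\MST^{\wG_n})$ converges in distribution to $B_r(\rG,\MSF^{\rG})$ (then one concludes for a general bounded continuous test function by a triangle/coupling inequality at the level of these balls).

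The core observation I would exploit is that membership in the minimum spanning tree is \emph{almost} a local function, via the cut rule: for a finite connected graph $e=\{u,v\}\in\MST^{\wG_n}$ if and only if $u$ and $v$ lie in distinct components of $(\wG_n\setminus\{e\})(w(e))$, i.e.\ there is no $u$--$v$ path avoiding $e$ all of whose edges are lighter than $w(e)$. For $K\in\bN$ I would introduce the truncated indicator
\begin{align*}
    F_K(e)=\rI\big\{\text{no }u\text{--}v\text{ path of length }\le K\text{ avoiding }e\text{ with all weights }<w(e)\big\}\,,
\end{align*}
which depends only on the ball of radius $r+K$ around $o_n$ together with the weights there, hence (because the limiting weights are continuous, so ties have probability zero) is a bounded, $\rG$‑a.s.\ continuous local functional. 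Thus for each fixed $K$, local convergence of $\wG_n$ yields that $B_r\big(\wG_n,(F_K(e))_e\big)$ converges in distribution to $B_r\big(\rG,(F_K^{\rG}(e))_e\big)$, with $F_K^{\rG}$ defined identically on $\rG$. Moreover $F_K(e)\ge\rI\{e\in\MST^{\wG_n}\}$, $F_K$ is non‑increasing in $K$, and $F_K(e)\downarrow\rI\{e\in\MST^{\wG_n}\}$ unless the light component $\comp^{\wG_n\setminus\{e\}}_u(w(e))$ of an endpoint is large; likewise $F_K^{\rG}(e)\downarrow F_\infty^{\rG}(e)=\rI\{u,v\text{ not joined by a finite light path}\}$, which coincides with $\rI\{e\in\MSF^{\rG}\}$ except in the single genuinely non‑local situation where both components $\comp^{\rG\setminus\{e\}}_u(w(e))$ and $\comp^{\rG\setminus\{e\}}_v(w(e))$ are infinite, i.e.\ $e$ bridges two distinct infinite clusters.

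The hard part — the only step beyond bookkeeping — is to control, uniformly in $n$, the inaccuracy of this truncation for the finite graphs, namely to show
\begin{align*}
    \lim_{K\to\infty}\ \limsup_{n\to\infty}\ \prob\Big(\exists\,e\in E\big(B_r(\wG_n,o_n)\big):\ F_K(e)\neq\rI\{e\in\MST^{\wG_n}\}\Big)=0\,,
\end{align*}
together with the matching statement that $F_\infty^{\rG}$ differs from $\rMSF^{\rG}$ on $B_r$ only on an event of small probability. A disagreement $F_K(e)\neq\rI\{e\in\MST^{\wG_n}\}$ forces the light component of an endpoint of $e$ to exceed size $K$, so the bad event splits into a ``medium'' regime (component of size between $K$ and $\varepsilon n$) and a ``giant'' regime (component of linear size, the finite‑graph avatar of an infinite cluster at level $w(e)$). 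Conditioning on $w(e)=p$ and using that the weights are i.i.d.\ $\uniform$ independent of the graph, the medium regime is killed by a first‑moment/component‑counting bound on the number of pairs of vertices near the root lying in distinct components of size in $[K,\varepsilon n]$ at level $p$, vanishing as $K\to\infty$ and then $\varepsilon\to0$; the giant regime is matched with the limiting event that the endpoint lies in an infinite cluster of $\rG$ at level $p$, so that in both the finite graph and the limit the edge is correctly excluded from the spanning tree/forest. This component‑counting estimate is exactly where the non‑locality of the MST surfaces, and is the substantive ingredient behind \cite[Theorem~5.4]{Aldous2004}. Granting it, the proof closes by the triangle inequality: $\big|\expec[h(B_r(\wG_n,\MST^{\wG_n}))]-\expec[h(B_r(\rG,\MSF^{\rG}))]\big|$ is bounded by the probability that $\MST^{\wG_n}$ disagrees with the $F_K$‑decision on $B_r$, plus the (fixed‑$K$) error in $B_r(\wG_n,(F_K(e))_e)\to B_r(\rG,(F_K^{\rG}(e))_e)$ which vanishes as $n\to\infty$, plus the probability that $F_\infty^{\rG}$ disagrees with $\MSF^{\rG}$ on $B_r$; sending first $n\to\infty$ and then $K\to\infty$ gives the claim.
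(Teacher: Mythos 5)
The paper itself offers no proof of this statement --- it is imported wholesale from \cite[Theorem~5.4]{Aldous2004} --- so your attempt can only be compared with the argument in that reference. Your local machinery is correct as far as it goes: the cut criterion, the truncated indicators $F_K$, their monotonicity in $K$, and their a.s.\ continuity (no ties) are the right tools, and they do establish the two genuinely \emph{local} inclusions, namely that every edge of $B_r$ having one finite light component eventually lies in $\MST^{\wG_n}$, and that every edge of $B_r$ whose endpoints are joined by a finite light path eventually does not. In other words, any subsequential local limit of $(\wG_n,\MST^{\wG_n})$ is squeezed between the wired minimal spanning forest (the $\MSF^{\rG}$ of this paper) and the free one.

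The gap is in the step you yourself single out as the hard one. The estimate $\lim_{K}\limsup_{n}\prob\big(\exists e\in E(B_r(\wG_n,o_n)):F_K(e)\neq\rI\{e\in\MST^{\wG_n}\}\big)=0$ is false in general, as is the companion claim that $F^{\rG}_\infty$ agrees with $\rI\{\cdot\in\MSF^{\rG}\}$ on $B_r$ up to small probability: on a supercritical regular tree or on the PWIT, an edge $e$ near the root bridges two disjoint infinite clusters at level $w(e)$ with probability bounded away from $0$, and on this event $F_K(e)=1$ for every $K$ while $e\notin\MSF^{\rG}$ and (if the theorem is to hold) $e\notin\MST^{\wG_n}$. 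So two of the three terms in your closing triangle inequality do not vanish, and the ``matching'' you invoke to cancel them is precisely what needs proof: that linear-size clusters of $\wG_n(p)$ play the role of infinite clusters of $\rG(p)$, and that any two of them merge with high probability so that the bridging edge is indeed rejected by $\MST^{\wG_n}$. Neither fact follows from local weak convergence alone; this is essentially Assumption~\ref{ass:local}, which the present paper imposes as a \emph{separate} hypothesis in Theorem~\ref{thm:Prim convergence} and which can fail for locally convergent sequences (Section~\ref{sec-assumptions}), so it cannot be used to prove the unconditional statement. The proof in \cite{Aldous2004} closes this non-local gap by a global, soft argument instead: the expected root-degree of $\MST^{\wG_n}$ is $2(n-1)/n\to2$, a mass-transport computation shows that the expected root-degree of the wired MSF of the unimodular limit is also $2$, and since the wired MSF is contained in every subsequential limit, equality of expected degrees forces the limit to coincide with the wired MSF. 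Your write-up needs this step (or a genuine substitute for the merging estimate) to become a proof.
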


It is easy to see that, when $\rG_n$ is the standard extension of $G_n$, the previous result implies that the paired rooted weighted convergence also occurs, i.e.,
\begin{align*}
    \big(\rG_n,\rMST^{\rG_n}\big)\lwc\big(\rG,\rMSF^{\rG}\big)\,.
\end{align*}
Furthermore, using a similar argument as that of the original proof, the previous convergence of measures can be extended, thanks to Skorohod's representation theorem, to an almost-sure convergence on an appropriate probability space.
Under this coupling, for any $r\geq1$, there exists $n_0$ such that, for $n\geq n_0$, we have $\rG_n\equiv_r\rG$ and $\rMST^{\rG_n}\equiv_r\rMSF^{\rG}$.
Naturally, this can be extended to $r=r_n$ growing slowly enough.
We say that a sequence of functions $k=(k_n(\cdot))_{n\geq1}$ is a \textit{linearly growing sequence} with respect to $(\rG_n)_{n\geq1}$ (or equivalently with respect to $(G_n)_{n\geq1}$) if, for any $t\in[0,1]$, we have $\lim_{n\rightarrow\infty}k_n(t)/n=t$, and further $(k_n(0))_{n\geq1}$ is a slowly growing radius diverging to infinity such that there exists a coupling satisfying
\begin{align}\label{eq:slowly growing}
    \prob\Big(\rG_m\equiv_{k_n(0)}\rG\textrm{ and }\rMSF^{\rG_m}\equiv_{k_n(0)}\rMSF^{\rG}\Big)=1
\end{align}
for all $m\geq n$.
In other words, this definition means that $k_n(t)=nt+o(n)$ where the $o(n)$ term when $t=0$ is diverging at a rate allowing us to couple $(\rG_n,\rMST^{\rG_n})$ with $(\rG,\rMSF^{\rG})$.
\smallskip

After having stated the fundamental result regarding the convergence of the minimum spanning tree, and having defined linearly growing sequences, we now complete this section with applications of Assumption~\ref{ass:local}.
The first result states that the largest component of the finite sequence of graphs asymptotically covers the same proportion of vertices as the probability for the graph to be infinite in the limit:

\begin{proposition}[{\cite[Theorem 2.28]{VanderHofstad2024}}]\label{prop:theta}
    Let $(G_n)_{n\geq1}$ be locally converging in probability to $(G,o)$.
    Let $(\rG_n)_{n\geq1}$ and $\rG$ be the respective standard expansions of $(G_n)_{n\geq1}$ and $(G,o)$, and define $\theta$ as in~\eqref{eq:theta}.
    Then, if $(G_n)_{n\geq1}$ satisfies Assumption~\ref{ass:local}, for any $p\in[0,1]$,
    \begin{align*}
        \frac{\big|\comp^{\rG_n}_{(1)}(p)\big|}{n}\probc\theta(p)
        \hspace{0.5cm}\textrm{and}\hspace{0.5cm}
        \frac{\big|\comp^{\rG_n}_{(2)}(p)\big|}{n}\probc0\,.
    \end{align*}
    In fact, if Assumption~\ref{ass:local} holds for some $p\in[0,1]$, then also Proposition \ref{prop:theta} holds for that $p$.
\end{proposition}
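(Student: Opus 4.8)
The plan is to prove this by a second-moment / counting argument that converts the ``percolation giant is almost local'' hypothesis (Assumption~\ref{ass:local}) into control of the fluctuations of $|\comp^{\rG_n}_{(1)}(p)|$, following the template of \cite[Theorem 2.28]{VanderHofstad2024}. First I would introduce, for fixed $p$ and a threshold $k$, the random variable $Z_{n,k} = \#\{v \in V(\rG_n) : |\comp^{\rG_n}_v(p)| \geq k\}$, the number of vertices sitting in a ``large'' cluster at level $p$. The key observation is that $Z_{n,k}/n$ converges in probability to $\theta_{\geq k}(p) := \prob(|\comp^{\rG}_o(p)| \geq k)$: this is a purely local statement, since the event $\{|\comp^{\rG_n}_v(p)| \geq k\}$ is determined by the ball $B_k(\rG_n, v)$ (a cluster of size $\geq k$ reaches within $k$ steps), so $\expec[Z_{n,k}]/n \to \theta_{\geq k}(p)$ by local convergence in probability, and concentration of $Z_{n,k}/n$ around its mean follows from the standard pairwise-second-moment bound for local functionals (again using local convergence in probability, which controls the joint law of balls around two uniformly chosen vertices). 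Letting $k\to\infty$, $\theta_{\geq k}(p) \downarrow \theta(p)$ by monotone convergence, since $\{|\comp^{\rG}_o(p)| \geq k\} \downarrow \{|\comp^{\rG}_o(p)| = \infty\}$.

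Next I would deploy Assumption~\ref{ass:local}. It says precisely that the expected number of \emph{ordered pairs} $(u,v)$ lying in distinct large clusters is $o(n^2)$, in the iterated-limit sense $\lim_{k}\limsup_n$. This means that, with high probability, essentially all of the $Z_{n,k}$ vertices in large clusters lie in a \emph{single} cluster. Quantitatively: if $N_{n,k}^{(1)} \geq N_{n,k}^{(2)} \geq \cdots$ denote the sizes of the clusters at level $p$ that have size $\geq k$, then $\sum_{i \geq 2} N_{n,k}^{(i)} \big( Z_{n,k} - N_{n,k}^{(1)} \big)$-type sums are bounded by the pair count in the assumption, and one extracts that $N_{n,k}^{(2)}/n \to 0$ and $N_{n,k}^{(1)}/n$ has the same limit as $Z_{n,k}/n$, for each fixed $k$. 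Here one must be a little careful with the ordering of limits: I would fix $\varepsilon > 0$, choose $k$ large (using the $\limsup_n$ in Assumption~\ref{ass:local} and the fact that $\theta_{\geq k}(p) - \theta(p) < \varepsilon$), then take $n\to\infty$. Since $|\comp^{\rG_n}_{(1)}(p)| \geq |\comp^{\rG_n}_{(1),\geq k}(p)| = N_{n,k}^{(1)}$ once the largest cluster has size $\geq k$ (true w.h.p. when $\theta(p) > 0$; the case $\theta(p) = 0$ is handled separately and trivially, as then $Z_{n,k}/n \to \theta_{\geq k}(p) < \varepsilon$ forces $|\comp^{\rG_n}_{(1)}(p)|/n \to 0$), and also $|\comp^{\rG_n}_{(1)}(p)| \leq Z_{n,k}$ plus the vertices in clusters of size $< k$ — no wait, that last bound is false; instead I would sandwich via $N_{n,k}^{(1)} \leq |\comp^{\rG_n}_{(1)}(p)|$ together with $|\comp^{\rG_n}_{(1)}(p)| \leq Z_{n,k}$ whenever $|\comp^{\rG_n}_{(1)}(p)| \geq k$. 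Combining, $\big| |\comp^{\rG_n}_{(1)}(p)|/n - \theta(p) \big| < C\varepsilon$ w.h.p., and letting $\varepsilon \downarrow 0$ gives the first convergence; the second, $|\comp^{\rG_n}_{(2)}(p)|/n \to 0$, follows since for $n$ large the second-largest cluster either has size $< k$ (hence $o(n)$) or size $\geq k$, in which case it is $N_{n,k}^{(2)} = o(n)$.

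The main obstacle, and the place requiring the most care, is the interchange of the $k\to\infty$ and $n\to\infty$ limits together with the passage from the \emph{annealed} quantity $\theta(p) = \prob(|\comp^{\rG}_o(p)| = \infty)$ (an average over the random graph $\rG$ and the edge weights) to statements about the random finite graphs $\rG_n$: concentration of $Z_{n,k}/n$ must be established in probability, which requires that local convergence in probability controls pairs of balls, and one must check that the ``large cluster'' event is indeed expressible as a bounded (or suitably truncated) local functional so that $\expec[Z_{n,k} \mid \rG_n]/n$ concentrates. For the final remark (``if Assumption~\ref{ass:local} holds for some $p$, then Proposition~\ref{prop:theta} holds for that $p$''), I would simply note that every step above is carried out for that single fixed $p$ and never invokes the assumption at any other level, so the argument is genuinely pointwise in $p$.
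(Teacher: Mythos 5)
Your proposal is essentially correct, but note that the paper does not prove this proposition at all: it is imported verbatim from \cite[Theorem 2.28]{VanderHofstad2024} (``the giant is almost local''), and your argument is a faithful reconstruction of the proof given there --- the local functional $Z_{n,k}=\#\{v:|\comp^{\rG_n}_v(p)|\geq k\}$ with $Z_{n,k}/n\probc\prob(|\comp^{\rG}_o(p)|\geq k)$, the monotone limit in $k$, and the conversion of Assumption~\ref{ass:local} into the sandwich $Z_{n,k}-\mathrm{err}\leq|\comp^{\rG_n}_{(1)}(p)|\leq Z_{n,k}+k$ via pair counting. The only blemish is the garbled identity in your second paragraph: the pair count in Assumption~\ref{ass:local} equals $\sum_{i\neq j}N^{(i)}_{n,k}N^{(j)}_{n,k}=Z_{n,k}^2-\sum_i\bigl(N^{(i)}_{n,k}\bigr)^2\geq Z_{n,k}\bigl(Z_{n,k}-N^{(1)}_{n,k}\bigr)$, which is what yields $N^{(1)}_{n,k}\geq Z_{n,k}-(\mathrm{pairs})/Z_{n,k}$ and, together with $N^{(1)}_{n,k}N^{(2)}_{n,k}\leq\mathrm{pairs}$, the two claimed limits; your ordering of the $k$ and $n$ limits and your treatment of the $\theta(p)=0$ case are otherwise as in the cited source.
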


This provides a first characterisation of the sizes of components in the finite graphs.
It is interesting to note that the fact that the second largest component has a size of a smaller order than $n$ also means that this is true for every subsequent component.
We now provide a more local characteristic of the largest and other components:

\begin{proposition}[{\cite[Theorem 2.32]{VanderHofstad2024}}]\label{prop:local giant}
    Let $(G_n)_{n\geq1}$ be locally converging in probability to $(G,o)$.
    Let $(\rG_n)_{n\geq1}$ and $\rG$ be the respective standard expansions of $(G_n)_{n\geq1}$ and $(G,o)$, and define $\theta$ as in~\eqref{eq:theta}.
    Then, if $(G_n)_{n\geq1}$ satisfies Assumption~\ref{ass:local}, for any $p\in[0,1]$,
    \begin{align*}
        \comp^{\rG_n}_{(1)}(p)\lcp\comp^{\rG}_\root(p)~\Big|~\big|\comp^{\rG}_\root(p)\big|=\infty
        \hspace{0.5cm}\textrm{and}\hspace{0.5cm}
        \bigcup_{k\geq2}\comp^{\rG_n}_{(k)}(p)\lcp\comp^{\rG}_\root(p)~\Big|~\big|\comp^{\rG}_\root(p)\big|<\infty\,.
    \end{align*}
\end{proposition}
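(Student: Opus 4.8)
The plan is to transfer the statement to the percolated graph sequence and then read it off from the ``giant is almost local'' theory that already underlies Proposition~\ref{prop:theta}. Since the weights are i.i.d.\ $\uniform$, almost surely no edge has weight exactly $p$, so the map $\rG\mapsto\rG(p)$ deleting all edges of weight exceeding $p$ is a.s.\ continuous for the local metric $d$; hence $(\rG_n(p))_{n\geq1}$ converges locally in probability to $\rG(p)$, and for every radius $r$ and every vertex $v$ one has $B_r(\comp^{\rG_n}_v(p),v)=B_r(\rG_n(p),v)$. Moreover Assumption~\ref{ass:local} is exactly the hypothesis under which \cite[Theorem~2.32]{VanderHofstad2024} describes the component structure of $(\rG_n(p))_{n\geq1}$, so the two displayed convergences follow from that theorem once the reduction is made; it remains to recall the mechanism and explain where Assumption~\ref{ass:local} enters.

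The key quantitative input, extracted from Assumption~\ref{ass:local}, is a \emph{uniqueness of the large cluster} bound: writing $S_n(k)$ for the number of vertices of $\rG_n$ that lie in a component of $\rG_n(p)$ of size at least $k$ but not in $\comp^{\rG_n}_{(1)}(p)$, one has $\lim_{k\to\infty}\limsup_{n\to\infty}n^{-1}\expec[S_n(k)]=0$. Indeed, listing the components of $\rG_n(p)$ of size $\geq k$ in decreasing order as $C_1=\comp^{\rG_n}_{(1)}(p),C_2,\dots$ and setting $M_n=\sum_i|C_i|$, we have $S_n(k)=\sum_{i\geq2}|C_i|$ and the number of ordered pairs of vertices lying in distinct components both of size $\geq k$ equals $M_n^2-\sum_i|C_i|^2\geq 2|C_1|\,S_n(k)$. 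On the event $\{|C_1|\geq\varepsilon n\}$, which by Proposition~\ref{prop:theta} has probability tending to $1$ whenever $\varepsilon<\theta(p)$, this gives $S_n(k)\leq(2\varepsilon n)^{-1}(M_n^2-\sum_i|C_i|^2)$, while on the complement $S_n(k)\leq n$; taking expectations, dividing by $n$, and sending $n\to\infty$ then $k\to\infty$ kills the main term by Assumption~\ref{ass:local} and the error term by Proposition~\ref{prop:theta}. If $\theta(p)=0$, Proposition~\ref{prop:theta} gives $n^{-1}|\comp^{\rG_n}_{(1)}(p)|\convp0$ and only the second displayed convergence is nontrivial (the event conditioned on in the first is then null), and the degenerate case $\theta(p)=1$ is similar.

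It remains to assemble the ingredients. The function $\mathbbm{1}\{|\comp^{\Gamma}_\rho|\geq k\}$ is bounded and \emph{continuous} on rooted graphs --- it is locally constant, since a large enough ball around $\rho$ already decides whether the root component has at least $k$ vertices, whether that component is finite or infinite --- so local convergence in probability of $(\rG_n(p))_{n\geq1}$ yields, for any bounded continuous local $h$,
\[
\frac1n\sum_{v}h\big(\rG_n(p),v\big)\mathbbm{1}\big\{\big|\comp^{\rG_n}_v(p)\big|\geq k\big\}\;\convp\;\expec\Big[h\big(\comp^{\rG}_\root(p),\root\big)\mathbbm{1}\big\{\big|\comp^{\rG}_\root(p)\big|\geq k\big\}\Big].
\]
Now $\frac1n\sum_v h(\rG_n(p),v)\mathbbm{1}\{v\in\comp^{\rG_n}_{(1)}(p)\}$ differs from the left-hand side by at most $\|h\|_\infty n^{-1}S_n(k)$, which by Markov's inequality and the previous paragraph is negligible in probability uniformly in $n$ once $k$ is large, while on the right $\mathbbm{1}\{|\comp^{\rG}_\root(p)|\geq k\}\downarrow\mathbbm{1}\{|\comp^{\rG}_\root(p)|=\infty\}$ by dominated convergence; dividing by $n^{-1}|\comp^{\rG_n}_{(1)}(p)|\convp\theta(p)$ (Proposition~\ref{prop:theta}) and using $B_r(\comp^{\rG_n}_{(1)}(p),v)=B_r(\rG_n(p),v)$ for $v\in\comp^{\rG_n}_{(1)}(p)$ gives the first convergence. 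The second convergence then follows by complementation: write the empirical average over $\bigcup_{k\geq2}\comp^{\rG_n}_{(k)}(p)$ as the average over all of $\rG_n(p)$ minus the average over $\comp^{\rG_n}_{(1)}(p)$, normalise by $1-n^{-1}|\comp^{\rG_n}_{(1)}(p)|\convp1-\theta(p)$, and use $\mathbbm{1}\{|\comp^{\rG}_\root(p)|<\infty\}=1-\mathbbm{1}\{|\comp^{\rG}_\root(p)|=\infty\}$. The step I expect to require the most care is the uniqueness bound of the second paragraph, together with the interchange of the limits $n\to\infty$ and $k\to\infty$ throughout: this is precisely the content that goes beyond Proposition~\ref{prop:theta} and the only place where Assumption~\ref{ass:local} is genuinely needed.
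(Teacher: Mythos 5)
The paper does not prove this proposition: it is imported by citation as \cite[Theorem 2.32]{VanderHofstad2024}, so there is no in-paper argument to compare yours against. Your sketch is a correct reconstruction of the standard ``giant is almost local'' argument from that reference: the reduction to $\rG_n(p)\lcp\rG(p)$, the uniqueness bound $\lim_k\limsup_n n^{-1}\expec[S_n(k)]=0$ extracted from Assumption~\ref{ass:local} via the pair-counting identity $M_n^2-\sum_i|C_i|^2\geq 2|C_1|S_n(k)$ together with $|C_1|\geq\varepsilon n$ w.h.p.\ from Proposition~\ref{prop:theta}, and the three-limit assembly using the local constancy of $\mathbbm{1}\{|\comp^{\Gamma}_\rho|\geq k\}$ are all sound, and the degenerate cases $\theta(p)\in\{0,1\}$ are handled appropriately. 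Two small points you should make explicit in a full write-up: the symmetric difference between $\{v\in\comp^{\rG_n}_{(1)}(p)\}$ and $\{|\comp^{\rG_n}_v(p)|\geq k\}$ also contains the (at most $k$, and w.h.p.\ empty) vertices of a largest component of size below $k$, not only $S_n(k)$; and the identification $h(\rG(p),\root)=h(\comp^{\rG}_\root(p),\root)$ should be justified by noting that the local metric $d$ only sees the root component, so every $d$-continuous $h$ factors through it.
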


This second proposition is very useful as it states that we can approximately know whether the root belongs to the largest component or not simply from observing the neighbourhood of a node.
This is a key point in the proof of Proposition~\ref{prop:no large} below, stating that the only large component in a neighbourhood is the largest component:

\begin{proposition}\label{prop:no large}
    Let $(G_n)_{n\geq1}$ be locally converging in probability to $(G,o)$.
    Let $(\rG_n)_{n\geq1}$ and $\rG$ be the respective standard expansions of $(G_n)_{n\geq1}$ and $(G,o)$.
    Then, if $(G_n)_{n\geq1}$ satisfies Assumption~\ref{ass:local}, for any $p\in[0,1]$ and $r\geq0$,
    \begin{align*}
        \lim_{k\rightarrow\infty}\limsup_{n\rightarrow\infty}\prob\Big(\exists v\in B_r(\rG_n)\setminus\comp^{\rG_n}_{(1)}(p):|\comp^{\rG_n}_v(p)|\geq k\Big)=0\,.
    \end{align*}
    In other words, for any $\varepsilon>0$, there exists $k_0$ and $n_0=n_0(k_0)$ such that, for all $k\geq k_0$ and $n\geq n_0$,
    \begin{align*}
        \prob\Big(\exists v\in B_r(\rG_n)\setminus\comp^{\rG_n}_{(1)}(p):|\comp^{\rG_n}_v(p)|\geq k\Big)\leq\varepsilon\,.
    \end{align*}
\end{proposition}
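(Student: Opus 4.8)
The plan is to deduce Proposition~\ref{prop:no large} from Proposition~\ref{prop:local giant} together with Proposition~\ref{prop:theta}. The intuition is that if a vertex $v\in B_r(\rG_n)$ lies in a large but non-giant component at level $p$, then from the local perspective of $v$ it looks as if $v$ belongs to an \emph{infinite} component of $\rG(p)$ (a large finite component is, for bounded neighbourhoods, indistinguishable from an infinite one). But Proposition~\ref{prop:local giant} says that, conditionally on $|\comp^{\rG}_o(p)|=\infty$, the giant $\comp^{\rG_n}_{(1)}(p)$ is what is locally seen; so a vertex seeing an infinite-looking component should be in the giant with high probability. The contradiction with $v\notin\comp^{\rG_n}_{(1)}(p)$ forces the probability of the bad event to vanish.

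First I would fix $r\geq0$ and $p\in[0,1]$ and choose a radius $\rho$ (depending on a tolerance $\varepsilon$) so that, by Proposition~\ref{prop:local giant} applied at the root, the event that $B_\rho(\comp^{\rG_n}_{(1)}(p),v)$ is isomorphic to $B_\rho(\comp^{\rG}_o(p),o)$ whenever $|\comp^{\rG}_o(p)|=\infty$ holds with probability at least $1-\varepsilon$ uniformly in $n$ large; symmetrically, using the second half of Proposition~\ref{prop:local giant}, when $|\comp^{\rG}_o(p)|<\infty$ the ball $B_\rho$ of the component of a uniform vertex that is \emph{not} in the giant matches the finite limiting component, which has size bounded by some $k_1=k_1(\rho,\varepsilon)$ with probability $\geq 1-\varepsilon$. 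Next I would sum over the (at most $|B_r(\rG_n)|$, which is tight) vertices $v\in B_r(\rG_n)$: by a union bound over $v$ and the exchangeability of the uniformly chosen root, the probability that \emph{some} $v\in B_r(\rG_n)\setminus\comp^{\rG_n}_{(1)}(p)$ has $|\comp^{\rG_n}_v(p)|\geq k$ for $k$ large is controlled, for each fixed $v$, by the probability that $v$'s neighbourhood ``looks infinite'' yet $v$ is not in the giant --- and this is exactly the failure probability in Proposition~\ref{prop:local giant}, which is $o(1)$. Taking $k_0$ larger than $k_1$ and pushing $n\to\infty$ then $k\to\infty$ gives the claimed $\limsup$.

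More carefully, I would phrase it without a union bound over all of $B_r$ at once, since $|B_r(\rG_n)|$ is only tight (not bounded): first pick $M$ with $\prob(|B_r(\rG_n)|>M)\leq\varepsilon$ for all large $n$ (possible because $(G_n)$ converges locally in probability, so $B_r(\rG_n)$ is tight), restrict to that event, and then union-bound over the $\leq M$ vertices. For each such vertex $v$, the key estimate is: $\prob\big(|\comp^{\rG_n}_v(p)|\geq k,\ v\notin\comp^{\rG_n}_{(1)}(p)\big)\to 0$ as $n\to\infty$ then $k\to\infty$, which follows because under the coupling of Proposition~\ref{prop:local giant} a vertex outside the giant has its $\rho$-neighbourhood converging to that of a finite limiting component, and for $k>k_1(\rho,\varepsilon)$ a component of size $\geq k$ cannot fit inside such a bounded $\rho$-ball unless $\rho$ is large; letting $\rho\to\infty$ kills the discrepancy. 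Summing the $\leq M$ contributions and adding the $\varepsilon$ from the $|B_r(\rG_n)|>M$ event gives a bound tending to $0$.

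The main obstacle I anticipate is making the ``large finite component looks infinite'' step quantitative and uniform: Proposition~\ref{prop:local giant} is a statement about the component of the \emph{uniform root}, and I need to transfer it to \emph{every} vertex in a bounded ball around the root, which requires an exchangeability/averaging argument and care that the conditioning events ($|\comp^{\rG}_o(p)|=\infty$ versus $<\infty$) interact correctly with ``$v$ is/ is not in the giant'' --- in particular one must rule out that a non-root vertex in $B_r$ sits in the giant while the root does not, or vice versa, which is where Proposition~\ref{prop:theta} (the second-largest component is $o(n)$) is needed to guarantee there is a \emph{unique} linear-size component to call ``the giant.'' Once the dictionary ``locally infinite-looking $\Leftrightarrow$ in the giant, up to probability $\varepsilon$'' is set up, the rest is a routine union bound.
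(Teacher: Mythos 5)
Your strategy is the same as the paper's: control the size of $B_r(\rG_n)$ by tightness, reduce the existence statement to a statement about a typical vertex, and conclude with Proposition~\ref{prop:local giant}. However, the step you yourself flag as ``the main obstacle'' is exactly where the write-up stops short of a proof, and the per-vertex estimate you propose is not one you can establish. For a \emph{fixed} vertex $v$ of $G_n$, the quantity $\prob\big(|\comp^{\rG_n}_v(p)|\geq k,\ v\notin\comp^{\rG_n}_{(1)}(p)\big)$ need not tend to zero: local convergence in probability only controls \emph{averages} over a uniformly chosen vertex, not individual vertices (think of a graph with one distinguished vertex deterministically sitting in a large non-giant cluster). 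Likewise, ``union-bound over the $\leq M$ vertices of $B_r(\rG_n)$'' is a union over a \emph{random} index set, so it does not reduce to finitely many fixed-vertex estimates.

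The paper closes this gap with a short symmetry argument that you should make explicit: $v\in B_r(G_n,o_n)$ if and only if $o_n\in B_r(G_n,v)$, so summing over all $v\in V(G_n)$ and using that $o_n$ is uniform gives
\begin{align*}
&\prob\Big(\exists v\in B_r(\rG_n)\setminus\comp^{\rG_n}_{(1)}(p):|\comp^{\rG_n}_v(p)|\geq k,\ \big|B_{2r}(\rG_n)\big|\leq M\Big)\\
&\qquad\leq \frac{M}{|V(G_n)|}\sum_{v\in V(G_n)}\prob\Big(v\in\bigcup_{i\geq2}\comp^{\rG_n}_{(i)}(p),\ |\comp^{\rG_n}_v(p)|\geq k\Big)\,,
\end{align*}
where one first replaces the event $|B_r(\rG_n)|\leq M$ by $|B_{2r}(\rG_n)|\leq M$ so that $|B_r(G_n,v)|\leq M$ holds for \emph{every} $v$ within distance $r$ of the root. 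The averaged sum on the right is exactly what Proposition~\ref{prop:local giant} controls: it converges to $\prob\big(|\comp^{\rG}_o(p)|\geq k\ \big|\ |\comp^{\rG}_o(p)|<\infty\big)$, which vanishes as $k\to\infty$. Note also that your detour through ``a large finite component looks infinite in a $\rho$-ball'' is unnecessary: the second convergence in Proposition~\ref{prop:local giant} already gives the limiting law of the component of a uniform vertex outside the giant, which is all that is needed, and Proposition~\ref{prop:theta} plays no role in the paper's argument.
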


\begin{proof}
    Fix $\varepsilon>0$.
    First observe that, for $M>0$ large enough and for all $n\geq1$,
    \begin{align*}
        \prob\Big(\big|B_{2r}(\rG_n)\big|>M\Big)\leq\varepsilon\,.
    \end{align*}
    It follows that
    \begin{align*}
        &\prob\Big(\exists v\in B_r(\rG_n)\setminus\comp^{\rG_n}_{(1)}(p):|\comp^{\rG_n}_v(p)|\geq k\Big)\\
        &\pskip\leq\varepsilon+\prob\Big(\exists v\in B_r(\rG_n)\setminus\comp^{\rG_n}_{(1)}(p):|\comp^{\rG_n}_v(p)|\geq k,\big|B_{2r}(\rG_n)\big|\leq M\Big)\\
        &\pskip\leq\varepsilon+\prob\Big(\exists v\in B_r(\rG_n):v\in\bigcup_{i\geq2}\comp^{\rG_n}_{(i)}(p),|\comp^{\rG_n}_v(p)|\geq k,\big|B_r(G_n,v)\big|\leq M\Big)\,,
    \end{align*}
    where we have used that if the ball of radius $2r$ has size at most $M$, then so does any ball of radius $r$ of a vertex at distance at most $r$ from the original root.
    Furthermore, using that $v\in B_r(\rG_n)=B_r(G_n,o_n,w_n)$ if and only if $o_n\in B_r(G_n,v)$, and by summing over all possible $v\in V(\rG_n)$, the previous bound becomes
    \begin{align*}
        &\prob\Big(\exists v\in B_r(\rG_n)\setminus\comp^{\rG_n}_{(1)}(p):|\comp^{\rG_n}_v(p)|\geq k\Big)\\
        &\pskip\leq\varepsilon+\sum_{v\in V(G_n)}\prob\Big(o_n\in B_r(G_n,v),v\in\bigcup_{i\geq2}\comp^{\rG_n}_{(i)}(p),|\comp^{\rG_n}_v(p)|\geq k,\big|B_r(G_n,v)\big|\leq M\Big)\,.
    \end{align*}
    Observe that only the first event of the probability depends explicitly on $o_n$. Thus, using that $o_n$ is uniform over the set of vertices, it follows that
    \begin{align*}
        &\prob\Big(\exists v\in B_r(\rG_n)\setminus\comp^{\rG_n}_{(1)}(p):|\comp^{\rG_n}_v(p)|\geq k\Big)\\
        &\pskip\leq\varepsilon+\sum_{v\in V(G_n)}\expec\left[\frac{|B_r(G_n,v)|}{|V(G_n)|}\cdot\rI_{\left\{v\in\bigcup_{i\geq2}\comp^{\rG_n}_{(i)}(p),|\comp^{\rG_n}_v(p)|\geq k,|B_r(G_n,v)|\leq M\right\}}\right]\\
        &\pskip\leq\varepsilon+\frac{M}{|V(G_n)|}\sum_{v\in V(G_n)}\prob\Big(v\in\bigcup_{i\geq2}\comp^{\rG_n}_{(i)}(p),|\comp^{\rG_n}_v(p)|\geq k\Big)\,.
    \end{align*}
    To conclude the proof, we simply note that Proposition~\ref{prop:local giant} tells us that
    \begin{align*}
        \lim_{n\rightarrow\infty}\frac{1}{|V(G_n)|}\sum_{v\in V(G_n)}\prob\Big(v\in\bigcup_{i\geq2}\comp^{\rG_n}_{(i)}(p),|\comp^{\rG_n}_v(p)|\geq k\Big)=\prob\Big(|\comp^{\rG}_o(p)|\geq k~\Big|~|\comp^{\rG}_o(p)|<\infty\Big),
    \end{align*}
    and the right-hand side converges to $0$ as $k\rightarrow\infty$.
    This proves the desired convergence.
\end{proof}

\subsection{Process convergence}
\label{sec:lpc}

We conclude Section \ref{sec:background} with a short background on process convergence for sequences of stochastic processes as used in Theorem~\ref{thm:Prim convergence}.
The main strategy is to use the standard \textit{Skorohod topology} (also called \textit{Skorohod $J_1$ topology}) to allow for processes on Polish spaces (like the space of rooted weighted graphs) to be defined.
We hereafter simplify the process convergence to three conditions and refer to~\cite[Chapter~3]{Billingsley1999} for more details on this topology.

Let $(\rG_n(t))_{n\geq1,t\in[0,1]}$ be a sequence of cadlag processes on the space of rooted weighted graphs and recall the definition of $d$ from~\eqref{eq:def local metric}.
Then we say that it converges towards the cadlag process $\rG=(\rG(t))_{t\in[0,1]}$, and write it $(\rG_n(t))_{t\in[0,1]}\lpc(\rG(t))_{t\in[0,1]}$ if and only if it satisfies the following three conditions.
\begin{itemize}
    \item[$\rhd$] The multi-dimensional process converges: for all integer $r\geq1$, sets of time $t_1,\ldots,t_k$, and rooted weighted graphs $H_1,\ldots,H_k$, we have
    \begin{align}\label{eq:lpc multi-dim}
        \lim_{n\rightarrow\infty}\prob\Big(\rG_n(t_1)\equiv_rH_1,\ldots,\rG_n(t_k)\equiv_rH_k\Big)=\prob\Big(\rG(t_1)\equiv_rH_1,\ldots,\rG(t_k)\equiv_rH_k\Big)\,.
    \end{align}
    \item[$\rhd$] The limiting process is continuous at $1$: for all $\varepsilon>0$, we have
    \begin{align}\label{eq:lpc one-cont}
        \lim_{\delta\rightarrow0}\prob\Big(d\big(\rG(1),\rG(1-\delta)\big)\geq\varepsilon\Big)=0
    \end{align}
    \item[$\rhd$] The sequence does not have high increments: for all $\varepsilon>0$, there exists $\delta_n\in(0,1)$ such that
    \begin{align}\label{eq:lpc small jump}
        \lim_{n\rightarrow\infty}\prob\Big(\exists t<t'<t''<t+\delta_n:d\big(G_n(t),G_n(t')\big)\geq\varepsilon\textrm{ and }d\big(G_n(t'),G_n(t'')\big)\geq\varepsilon\Big)=0\,.
    \end{align}
\end{itemize}
This definition for the local process convergence arises from~\cite[Theorem~13.3]{Billingsley1999} and the latter two conditions~\eqref{eq:lpc one-cont} and~\eqref{eq:lpc small jump} can be understood as the graph process being tight.
We refer to the extensive discussion of dynamic local convergence in \cite{Milewska2023}.
\smallskip

It is worth noting that while this notion of convergence can be seen as very generic, the structure of $\rG_n(t)$ is specific in our case.
Indeed, it is always of the form $f_t(G_n,o_n,w_n)$ where $(G_n)_{n\geq1}$ is a generic sequence of graphs, $o_n$ is a uniformly chosen vertex, $w_n$ are independent $\uniform$ edge weights, and $f_{t}$ is a function mapping rooted weighted graphs unto rooted trees (more precisely, Prim's algorithm applied a partial number of steps).

\section{Convergence of Prim's algorithm}\label{sec:thm}

The goal of this section is to prove Theorem~\ref{thm:Prim convergence}. Let us start by recalling some notations that we use throughout this section.
\medskip

For the rest of the section, we assume that $(G_n)_{n\geq1}$ converges locally in probability towards $(G,o)$ and we let $(\rG_n)_{n\geq1}$ and $\rG$ be the respective standard expansions of $(G_n)_{n\geq1}$ and $(G,o)$.
Using Skorohod's representation theorem, we assume without loss of generality that $(\rG_n,\rMST^{\rG_n})$ converges almost-surely towards $(\rG,\rMSF^{\rG})$ and we let $k=(k_n(\cdot))_{n\geq1}$ be a linearly growing sequence satisfying~\eqref{eq:slowly growing}.
We let $\theta=\theta^{\rG}$, $\theta^{-1}$, and $\speed=\speed^{\rG}$ be defined as in~\eqref{eq:theta}, \eqref{eq:theta inv}, and~\eqref{eq:speed} respectively. Finally, assume that $\rG$ satisfies Assumption~\ref{ass:speed} and $(\rG_n)_{n\geq1}$ satisfies Assumption~\ref{ass:local} (Assumption~\ref{ass:smooth} is not necessary for the first results).
\smallskip

The rest of the section is organised as follows.
In Section~\ref{sec:special Prim}, we cover a special convergence result for Prim's algorithm which we then use in Section~\ref{sec:marginals} to first prove the necessary one-dimensional convergence for Theorem~\ref{thm:Prim convergence}. In turn, we then extend one-dimensional to multi-dimensional convergence as in~\eqref{eq:lpc multi-dim}.
Finally, we conclude with the proof that Prim's algorithm and its limit satisfy the two conditions~\eqref{eq:lpc one-cont} and~\eqref{eq:lpc small jump}, thus proving convergence as a process.

\subsection{Prim's algorithm with an appropriate number of steps}\label{sec:special Prim}

In this section, we provide a special case for the convergence of Prim's algorithm towards the expanded invasion percolation cluster, which is the first key step in the proof of Theorem~\ref{thm:Prim convergence}. We state this result as follows:

\begin{proposition}\label{prop:main result}
    Let Assumptions~\ref{ass:speed} and \ref{ass:local} hold.
    Using the notations from the beginning of Section~\ref{sec:thm}, for any $r\geq1$ and $p$ such that $\theta(p)>0$,
    \begin{align*}
        \lim_{n\rightarrow\infty}\prob\Big(\prim_{\speed(p)+\left|\comp^{\rG_n}_{(1)}(p)\right|}(\rG_n)\equiv_r\rMSF^{\rG}_+(p)\Big)=1\,.
    \end{align*}
\end{proposition}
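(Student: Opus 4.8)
The plan is to show that running Prim's algorithm for exactly $\speed(p) + |\comp^{\rG_n}_{(1)}(p)|$ steps produces, around the root, a tree that matches $\rMSF^{\rG}_+(p)$ on any fixed radius $r$, with probability tending to one. The guiding intuition is the decomposition from Lemma~\ref{lem:eipc struct}: on the infinite graph, $\rMSF^{\rG}_+(p) = \prim_{\speed(p)+1}(\rG) \cup (\MSF^{\rG} \cap \comp^{\rG}_{v(p)}(p))$, so I want to argue that after $\speed(p)+1$ steps on $\rG_n$ Prim's algorithm has essentially reconstructed $\prim_{\speed(p)+1}(\rG)$ (this is the ``invasion'' part, which happens in a bounded number of steps that is stable under the local coupling), and that the remaining $|\comp^{\rG_n}_{(1)}(p)| - 1$ steps let Prim exhaust exactly the giant component at level $p$, whose intersection with the spanning tree around the root is the ``$\MSF^{\rG}(p)$'' part.

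First I would work under the almost-sure Skorohod coupling of $(\rG_n,\rMST^{\rG_n})$ with $(\rG,\rMSF^{\rG})$, so that on a ball of radius $R$ (for $R$ large, to be chosen as a function of $r$) the finite and infinite objects agree for $n$ large. Under Assumption~\ref{ass:speed}, since $\theta(p)>0$, Proposition~\ref{prop:speed} gives $\speed(p) < \infty$ a.s., so $\speed(p)$ and the first $\speed(p)+1$ Prim-steps on $\rG$ involve only finitely many vertices and edges, all within some random finite ball; hence for $n$ large the first $\speed(p)+1$ steps of Prim on $\rG_n$ coincide (under the coupling, up to the $1/r$-weight tolerance) with those on $\rG$, and the $(\speed(p)+1)$-th explored vertex $v(p)$ sits in $\comp^{\rG_n}_{v(p)}(p)$. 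Second, I would identify $\comp^{\rG_n}_{v(p)}(p)$ with the giant $\comp^{\rG_n}_{(1)}(p)$: by Proposition~\ref{prop:theta} the giant has size $\sim n\theta(p)$ and the second-largest is $o(n)$, while $v(p)$ lies in an infinite component in $\rG$, so by Proposition~\ref{prop:local giant} the component of $v(p)$ in $\rG_n$ is (with high probability) the giant — and in particular all other components reachable near the root at level $p$ are small, by Proposition~\ref{prop:no large}. Once Prim enters the giant (which, being the unique linear-size and by Assumption~\ref{ass:local} ``almost local'' component, it does within a number of steps bounded by the size of the explored non-giant pieces, $o(n)$ — actually exactly $\speed(p)+1$ steps by the coupling argument), it never leaves: every edge leaving the current tree that has weight $\le p$ keeps Prim inside $\comp^{\rG_n}_{(1)}(p)$, and Prim will consume all such edges before taking any edge of weight $>p$ adjacent to the giant. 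Therefore, after $\speed(p) + |\comp^{\rG_n}_{(1)}(p)|$ steps total, Prim has explored: the first $\speed(p)+1$ vertices (the invasion part), and then all remaining $|\comp^{\rG_n}_{(1)}(p)| - 1$ vertices needed to cover $\comp^{\rG_n}_{(1)}(p)$, using precisely the edges of $\MST^{\rG_n}$ restricted to that component of weight $\le p$ — which, restricted to $B_r$, matches $E(\rMSF^{\rG}(p))$ under the coupling.

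Third, I would assemble these pieces: on $B_r$, the explored tree $\prim_{\speed(p)+|\comp^{\rG_n}_{(1)}(p)|}(\rG_n)$ equals $\prim_{\speed(p)+1}(\rG_n) \cup (\MST^{\rG_n} \cap \comp^{\rG_n}_{(1)}(p))$, and each factor is $\equiv_r$-close to the corresponding factor of the Lemma~\ref{lem:eipc struct} decomposition of $\rMSF^{\rG}_+(p)$ — the first because the early Prim steps transfer under the coupling, the second because $\MST^{\rG_n} \equiv_R \MSF^{\rG}$ on a large ball and $\comp^{\rG_n}_{v(p)}(p) \equiv_R \comp^{\rG}_{v(p)}(p)$ on a large ball (using that $v(p)$ is local and the giant is almost local). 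Care is needed that the union operation respects $\equiv_r$ — this is fine since both are subgraphs of the ambient graph and agreement on $B_R$ for $R$ large forces agreement of the union on $B_r$.

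The main obstacle I expect is the second step: controlling the ``excursion'' phase, i.e., that Prim, after building the invasion cluster, actually reaches the giant component at level $p$ exactly at step $\speed(p)+1$ (not later), and does not get trapped in some other large-but-not-giant component or wander among many small components for more than $o(n)$ steps in a way that throws off the bookkeeping of the step count. Proposition~\ref{prop:no large} and Proposition~\ref{prop:local giant} are designed for exactly this — they guarantee no other large component near the root and that $v(p)$'s component is the giant — but one must be careful that the step-counting is exact rather than merely asymptotic: the statement uses the precise count $\speed(p) + |\comp^{\rG_n}_{(1)}(p)|$, so I need that once Prim reaches $v(p)$ at step $\speed(p)+1$ it has \emph{already} explored everything before $v(p)$ and \emph{only} the giant afterwards, with no leftover small-component detours consuming extra steps — which follows because Prim never adds an edge creating a cycle and, after entering an infinite (in the limit: linear) component, always prefers its interior weight-$\le p$ edges, so the post-$v(p)$ trajectory stays inside $\comp^{\rG_n}_{(1)}(p)$ until it is exhausted. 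Verifying this exactness under the a.s.\ coupling, and confirming it only affects the conclusion on $B_r$ with high probability, is the delicate part.
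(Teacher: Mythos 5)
Your proposal is correct and follows essentially the same route as the paper's proof: couple $(\rG_n,\rMST^{\rG_n})$ with $(\rG,\rMSF^{\rG})$, use Assumption~\ref{ass:speed} with Proposition~\ref{prop:speed} to make $\speed(p)$ finite so the early Prim steps transfer, identify $\comp^{\rG_n}_{v(p)}(p)$ with the giant via Proposition~\ref{prop:no large}, write the explored tree as $\prim_{\speed(p)+1}(\rG_n)\cup(\MST^{\rG_n}\cap\comp^{\rG_n}_{(1)}(p))$, and match this against the decomposition of Lemma~\ref{lem:eipc struct}. The only detail the paper makes more explicit is the weight-separation event (no two relevant edge weights, nor any weight and $p$, within $1/(2k_n)$ of each other) that guarantees the edge ordering, and hence the exact Prim trajectory, is preserved under the approximate isomorphism.
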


\begin{proof}
    \def\epsdiv{3}
    Fix $r\geq1$, $\varepsilon>0$ and $p$ satisfying $\theta(p)>0$.
    Recall that $k_n=k_n(0)$ diverges and that $\rG_n\equiv_{k_n}\rG$ and $\rMST^{\rG_n}\equiv_{k_n}\rMSF^{\rG}$ almost-surely.
    Using that $\theta(p)>0$ along with Assumption~\ref{ass:speed} and Proposition~\ref{prop:speed}, $\speed(p)$ is finite so there exists $n_0$ such that, for all $n\geq n_0$,
    \begin{align*}
        \prob\big(\speed(p)>k_n\big)\leq\frac{\varepsilon}{\epsdiv}\,.
    \end{align*}
    Moreover, by applying Proposition~\ref{prop:no large} with $r=k_{n_0}$, we know that there exists $n_1$ and $k_0$ such that, for all $n\geq n_1$ and $k\geq k_0$,
    \begin{align*}
        \prob\Big(\exists v\in B_{k_{n_0}}(\rG_n)\setminus\comp^{\rG_n}_{(1)}(p):|\comp^{\rG_n}_v(p)|\geq k\Big)\leq\frac{\varepsilon}{\epsdiv}\,.
    \end{align*}
    Further, set $n_2$ such that, for all $n\geq n_2$, we have $k_n\geq k_{n_0}+k_0$.
    Finally, using that $\rG$ is locally finite, define $n_3$ such that, for all $n\geq n_3$,
    \begin{align*}
        \prob\left(\inf\Big\{|w_1-w_2|:w_1\neq w_2\in\{p\}\cup\big\{w(e):e\in E\big(B_{k_{n_2}}(\rG)\big)\big\}\Big\}\leq\frac{1}{2k_n}\right)\leq\frac{\varepsilon}{\epsdiv}\,.
    \end{align*}
    In words, the distance between all edge weights in $B_{k_{n_2}}(\rG)$ as well as the distance between these edge weights and $p$ is at least $1/2k_{n_3}$.
    \smallskip
    
    For all $n$, we write $E_n$ for the union of all the bad events considered previously, that is $\speed(p)>k_{n_0}$, $B_{k_{n_0}}(\rG_n)$ has a component of size at least $k_0$ which is not its largest component, and the weights of the edges of $B_{k_{n_2}}(\rG)$ are at a distance less than $1/(2k_{n_3})$ from each other and from $p$.
    The previous definitions of $n_0,n_1,n_2,n_3$ imply that, for any $n\geq\max\{n_0,n_1,n_2,n_3\}$, $\prob(E_n)\leq\varepsilon$.
    We now show that, on the event $E_n^c$, the event stated in the proposition holds.
    \smallskip

    We recall that $v(p)=v^{\rG}(p)$ was defined in Section~\ref{sec:comp speed} as the first vertex in an infinite component explored by Prim's algorithm on $\rG$.
    In particular, on the event $E_n^c$, we have that $\speed(p)\leq k_{n_0}<\infty$ and so $|\comp^{\rG}_{v(p)}(p)|=\infty$.
    Furthermore, if $\speed(p)\leq k_{n_0}$ and since $k_{n_2}\geq k_{n_0}+k_0\geq k_0+\speed(p)$, we see that $\comp^{\rG}_{v(p)}(p)$ intersects $B_{k_{n_2}}(G,o)$ in at least $k_0$ vertices.
    \smallskip
    
    On the event $E_n^c$, and since the edge weights of $\rG_n$ and $\rG$ are at a distance at most $1/(2k_n)$, we observe that the ordering of the edges on the ball of radius $k_{n_2}$ is the same between $\rG_n$ and $\rG$.
    In particular, as long as Prim's algorithm does not leave the ball of radius $k_{n_2}$, it explores the same vertices and in the same order in $\rG_n$ and in $\rG$.
    Using that, on the event $E_n^c$, $\speed(p)\leq k_{n_0}$, it follows that Prim's algorithm behaves in the same way on $\rG_n$ and on $\rG$, at least until it reaches $v(p)$.
    However, then, since $\comp^{\rG}_{v(p)}(p)$ intersects $B_{k_{n_2}}(\rG)$ on at least $k_0$ vertices and since no weight in $\rG$ larger than $p$ would be smaller than $p$ in $\rG_n$, it follows that $|\comp^{\rG_n}_{v(p)}(p)|\geq k_0$ and, by using that we are on $E_n^c$ again, it follows that $\comp^{\rG_n}_{v(p)}(p)=\comp^{\rG_n}_{(1)}(p)$.

    \def\tree{\mathcal{T}}
    Consider now the tree $\tree=\prim_{\speed(p)+\left|\comp^{\rG_n}_{(1)}(p)\right|}(\rG_n)$. We aim to prove that, on $E_n^c$,
    \begin{align*}
        \tree\equiv_r\rMSF^{\rG}_+(p)\,.
    \end{align*}
    For all $n$ such that $r\leq k_n$, since the weights on both balls are at least $1/2k_n$ away from each other thanks to the coupling between $\rG$ and $\rG_n$, it only suffices to check that the two graphs are isomorphic to each other until depth $r$.
    To do so, we first observe that, since Prim's algorithm reaches $\comp^{\rG_n}_{(1)}(p)$ after $\speed(p)+1$ steps, it follows that
    \begin{align*}
        \tree=\prim_{\speed(p)+1}(\rG_n)\cup\MST^{(\comp^{\rG_n}_{(1)}(p),w_n)}\,,
    \end{align*}
    \invisible{\RvdH{Is the $w_n$ here a $p$? Discuss. \em there was indeed a $p$ missing, but the $w_n$ is only here because the components are un-rooted and un-weighted.}}
    where we recall that $w_n=(w_n(e))_{e\in E(G_n)}$ are the edge weights on $G_n$, and we note that $\tree$ is the first part of Prim's algorithm combined with the minimum spanning tree on its largest component at level $p$.
    Using that
    \begin{align*}
        \MST^{(\comp^{\rG_n}_{(1)}(p),w_n)}=\MST^{\rG_n}\cap\comp^{\rG_n}_{(1)}(p),
    \end{align*}
    along with the properties of the coupling, it follows that
    \begin{align*}
        \tree\equiv_r\prim_{\speed(p)+1}(\rG)\cup\left(\MSF^{\rG}\cap\comp^{\rG}_{v(p)}(p)\right),
    \end{align*}
    as long as $k_n\geq r$ and we are on $E_n^c$.
    However, this last term is exactly the target structure thanks to Lemma~\ref{lem:eipc struct}.
    Thus, we have proved that the desired equivalence is true, which occurs with probability at least $1-\varepsilon$ for $n$ large enough.
    This is exactly the desired convergence statement.
\end{proof}

\subsection{Marginal convergence}\label{sec:marginals}

In this section, we use Proposition~\ref{prop:main result} to show that the multi-dimensional distributions converge as stated in Theorem~\ref{thm:Prim convergence}. We start with the one-dimensional distribution:

\begin{proposition}[One-dimensional convergence]\label{prop:one dim}
    Let Assumptions~\ref{ass:speed} and \ref{ass:local} hold.
    Further, assume that $\theta$ satisfies Assumption~\ref{ass:smooth}.
    Using the notations from the beginning of Section~\ref{sec:thm}, for any $r\geq1$ and $t\in[0,1]$,
    \begin{align*}
        \lim_{n\rightarrow\infty}\prob\Big(\prim_{k_n(t)}(\rG_n)\equiv_r\rMSF^{\rG}_+\big(\theta^{-1}(t)\big)\Big)=1\,.
    \end{align*}
\end{proposition}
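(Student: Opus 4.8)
The plan is to deduce Proposition~\ref{prop:one dim} from Proposition~\ref{prop:main result} by showing that, for each fixed $t\in[0,1]$, the deterministic-looking number of steps $k_n(t)=tn+o(n)$ is, with high probability, caught between two quantities of the form $\speed(p)+|\comp^{\rG_n}_{(1)}(p)|$ for values of $p$ slightly below and slightly above $\theta^{-1}(t)$, and that Prim's tree only grows, so that it is then squeezed between two copies of the expanded invasion percolation cluster at nearby levels, which Assumption~\ref{ass:smooth} forces to agree on any fixed ball $B_r$.

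More precisely, first I would dispose of the boundary cases. For $t=0$ this is just the definition of a linearly growing sequence: $k_n(0)$ is a slowly growing radius with $\rG_n\equiv_{k_n(0)}\rG$ and $\rMST^{\rG_n}\equiv_{k_n(0)}\rMSF^{\rG}$ a.s., so $\prim_{k_n(0)}(\rG_n)\equiv_r\prim_{\infty}(\rG)=\rMSF^{\rG}_+(p_c)=\rMSF^{\rG}_+(\theta^{-1}(0))$ for $n$ large (using Lemma~\ref{lem:eipc struct} with $\speed(p_c)=\infty$ and the fact that the invasion percolation cluster on $B_r$ is determined after finitely many steps). For $t=1$ one uses $\theta^{-1}(1)=1$, so $\rMSF^{\rG}_+(1)=\rMSF^{\rG}$, and $\prim_{k_n(1)}(\rG_n)=\rMST^{\rG_n}$ whenever $k_n(1)\geq n$, which holds for $n$ large since $k_n(1)/n\to1$ and Prim stabilises at the full MST after $n$ steps; actually one must be slightly careful here because $k_n(1)$ could be below $n$, so instead I would pick $p<1$ close to $1$ with $\theta^{-1}(t')=p$ for $t'$ slightly below $1$ and run the squeezing argument below, then let $p\uparrow1$ and use continuity of $\theta$ (Assumption~\ref{ass:smooth}) together with the fact that $\rMSF^{\rG}_+(p)\equiv_r\rMSF^{\rG}$ for $p$ close enough to $1$ on any fixed ball.

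The heart is $t\in(0,1)$. Fix $r\geq1$, $\varepsilon>0$, and choose $\delta>0$ small. Set $p^-=\theta^{-1}(t-\delta)$ and $p^+=\theta^{-1}(t+\delta)$ (taking $t\pm\delta\in(0,1)$); by Assumption~\ref{ass:smooth}, $\theta^{-1}$ is continuous and strictly increasing, so $p^-<\theta^{-1}(t)<p^+$ and both can be made arbitrarily close to $\theta^{-1}(t)$ by shrinking $\delta$. By Proposition~\ref{prop:theta}, $|\comp^{\rG_n}_{(1)}(p^\pm)|/n\convp\theta(p^\pm)$, and $\theta(p^-)\leq t-\delta$ while $\theta(p^+)\geq t+\delta$ by definition of the pseudo-inverse (here I would be careful with the direction of the inequalities in~\eqref{eq:theta inv} and possibly use a further $\delta/2$ buffer, invoking strict monotonicity so that $\theta(p^+)>t$ strictly); also $\speed(p^\pm)$ are finite a.s.\ by Assumption~\ref{ass:speed} and Proposition~\ref{prop:speed}. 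Hence, with probability tending to $1$,
\[
\speed(p^-)+\big|\comp^{\rG_n}_{(1)}(p^-)\big|\ \leq\ k_n(t)\ \leq\ \speed(p^+)+\big|\comp^{\rG_n}_{(1)}(p^+)\big|,
\]
using $k_n(t)/n\to t$ and that $\speed(p^\pm)=O(1)$. Since $k\mapsto\prim_k(\rG_n)$ is increasing (in the subtree order), monotonicity of Prim gives, on this event,
\[
\prim_{\speed(p^-)+|\comp^{\rG_n}_{(1)}(p^-)|}(\rG_n)\ \subseteq\ \prim_{k_n(t)}(\rG_n)\ \subseteq\ \prim_{\speed(p^+)+|\comp^{\rG_n}_{(1)}(p^+)|}(\rG_n).
\]
Applying Proposition~\ref{prop:main result} at $p^-$ and at $p^+$, the outer two trees are, with high probability, $\equiv_r$ to $\rMSF^{\rG}_+(p^-)$ and $\rMSF^{\rG}_+(p^+)$ respectively. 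Finally, because there are only finitely many edges in $B_r(\rMSF^{\rG})$ and their weights are a.s.\ distinct and distinct from $\theta^{-1}(t)$, for $\delta$ small enough (depending on these finitely many weights) no edge of $\rMSF^{\rG}$ in $B_r$ has weight in $(p^-,p^+]$, so $\rMSF^{\rG}_+(p^-)$, $\rMSF^{\rG}_+(\theta^{-1}(t))$, and $\rMSF^{\rG}_+(p^+)$ all coincide on $B_r$ (they share the invasion percolation part and differ only by $\rMSF^{\rG}$-edges of weight in the relevant range). The sandwiched tree then agrees with $\rMSF^{\rG}_+(\theta^{-1}(t))$ on $B_r$ on an event of probability at least $1-\varepsilon$ for $n$ large, which is the claim.

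The step I expect to be the main obstacle is the squeezing inequality: controlling $k_n(t)$ between $\speed(p^\pm)+|\comp^{\rG_n}_{(1)}(p^\pm)|$ requires matching the $o(n)$ slack in the definition of the linearly growing sequence against the fluctuations of $|\comp^{\rG_n}_{(1)}(p^\pm)|/n$ around $\theta(p^\pm)$, and getting the inequalities in the correct (strict) direction relative to the definition of $\theta^{-1}$ — this is exactly where Assumption~\ref{ass:smooth} (strict monotonicity, so $\theta(p^+)>t$ and $\theta(p^-)<t$ with room to spare) is essential, and where one must take the $\delta\to0$ limit cleanly after the $n\to\infty$ limit. The secondary subtlety is the $t=1$ endpoint, handled by the $p\uparrow1$ approximation as above rather than by the naive "$\prim_n=\rMST$" remark.
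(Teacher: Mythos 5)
Your proposal is correct and follows essentially the same route as the paper's proof: squeeze $k_n(t)$ with high probability between $\speed(p_\pm)+|\comp^{\rG_n}_{(1)}(p_\pm)|$ for levels $p_\pm$ close to $\theta^{-1}(t)$, apply Proposition~\ref{prop:main result} at both levels, and use monotonicity of Prim together with the absence of edge weights of $B_r(\rG)$ near $\theta^{-1}(t)$ to close the sandwich, with the $t=0$ and $t=1$ endpoints treated separately via the coupling. Your choice $p^-=\theta^{-1}(t-\delta)$ even slightly streamlines the paper's argument by guaranteeing $\theta(p^-)=t-\delta>0$ and thus avoiding the paper's case distinction $\theta(p_-)=0$ versus $\theta(p_-)>0$.
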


\begin{proof}
    \def\epsdiv{5}
    Fix $\varepsilon>0$, $r\geq1$ and $t\in[0,1]$.
    Let $\delta>0$ be such that
    \begin{align}\label{eq:delta}
        \prob\Big(\inf\Big\{|\theta^{-1}(t)-w(e)|:e\in E\big(B_r(\rG)\big)\Big\}\leq\delta\Big)\leq\frac{\varepsilon}{\epsdiv}\,.
    \end{align}
    We start with the case $t=0$.

    When $t=0$, using the fact that $\rG_n\equiv_{k_n(0)}\rG$, it follows that
    \begin{align*}
        \prim_{k_n(0)}(\rG_n)\equiv_{k_n(0)}\prim_{k_n(0)}(\rG)\,.
    \end{align*}
    Then, since $k_n(0)\rightarrow\infty$, it also follows that
    \begin{align*}
        \lim_{n\rightarrow\infty}\prob\Big(\prim_{k_n(0)}(\rG)\equiv_r\prim_{\infty}(\rG)\Big)=1\,.
    \end{align*}
    It only remains to use that $\prim_\infty(\rG)=\rMSF^{\rG}_+(0)$, as stated in Lemma~\ref{lem:eipc struct}, to see that the desired result holds when $t=0$.

    Assume now that $t\in(0,1)$.
    Thanks to Assumption~\ref{ass:smooth} and the definition from \eqref{eq:theta inv}, $\theta^{-1}$ is continuous and strictly increasing from $[0,1)$ to $[p_c,1)$, with $\theta^{-1}(0)=p_c$, and we recall that $\theta^{-1}(1)=1$. Thus,
    we can find $p_-\leq\theta^{-1}(t)\leq p_+$ such that $\theta(p_-)<t<\theta(p_+)$ and $|p_+-\theta^{-1}(t)|,|p_--\theta^{-1}(t)|\leq\delta$.
    As a first step, we observe that $\theta(p_+)>t>0$, and therefore we can apply Proposition~\ref{prop:main result} to find $n_+$ such that, for all $n\geq n_+$,
    \begin{align*}
        \prob\Big(\prim_{\speed(p_+)+\left|\comp^{\rG_n}_{(1)}(p_+)\right|}(\rG_n)\equiv_r\rMSF^{\rG}_+(p_+)\Big)\geq1-\frac{\varepsilon}{\epsdiv}\,.
    \end{align*}
    Moreover, using that $|p_+-\theta^{-1}(p)|\leq\delta$ along with~\eqref{eq:delta}, the error in probability arising from replacing $p_+$ with $\theta^{-1}(p)$ in $\rMSF^{\rG}_+$ is $\varepsilon/\epsdiv$, leading to
    \begin{align*}
        \prob\Big(\prim_{\speed(p_+)+\left|\comp^{\rG_n}_{(1)}(p_+)\right|}(\rG_n)\equiv_r\rMSF^{\rG}_+\big(\theta^{-1}(p)\big)\Big)\geq1-\frac{2\varepsilon}{\epsdiv}\,.
    \end{align*}

    We next note that there are two possibilities:
    We either have that $\theta(p_-)>0$, in which case we can apply the same method and obtain that, for all $n\geq n_-$,
    \begin{align*}
        \prob\Big(\prim_{\speed(p_-)+\left|\comp^{\rG_n}_{(1)}(p_-)\right|}(\rG_n)\equiv_r\rMSF^{\rG}_+\big(\theta^{-1}(p)\big)\Big)\geq1-\frac{2\varepsilon}{\epsdiv}\,.
    \end{align*}
    Alternatively, $\theta(p_-)=0$, and, in that case, we use the definition of $k_n$ to see that
    \begin{align*}
        \prob\Big(\prim_{k_n(0)}(\rG_n)\equiv_r\prim_{\infty}(\rG)\Big)=1\geq1-\frac{\varepsilon}{\epsdiv}\,.
    \end{align*}
    Moreover, using Assumption~\ref{ass:speed} and Proposition~\ref{prop:speed}, we see that $\theta(p_-)=0$ implies that $\prob(\speed(p_-)=\infty)$, which, combined with Lemma~\ref{lem:eipc struct}, means that $\prim_\infty(\rG)=\rMSF^{\rG}_+(p_-)$ almost-surely.
    Again using the definition of $\delta$ and $p_-$, it follows that
    \begin{align*}
        \prob\Big(\prim_{k_n(0)}(\rG_n)\equiv_r\rMSF^{\rG}_+\big(\theta^{-1}(p)\big)\Big)\geq1-\frac{2\varepsilon}{\epsdiv}\,.
    \end{align*}
    Combing the previous results, we have proved that
    \begin{align*}
        \prob\Big(\prim_{L_n}(\rG_n)\equiv_r\rMSF^{\rG}_+\big(\theta^{-1}(p)\big)\equiv_r\prim_{U_n}(\rG_n)\Big)\geq1-\frac{4\varepsilon}{\epsdiv}\,,
    \end{align*}
    where
    \begin{align*}
        U_n=\speed(p_+)+\left|\comp^{\rG_n}_{(1)}(p_+)\right|,
    \end{align*}
    and
    \begin{align*}
        L_n&=\left\{\begin{array}{ll}
            \displaystyle\speed(p_-)+\left|\comp^{\rG_n}_{(1)}(p_-)\right| & \textrm{if $\theta(p_-)>0,$} \\
            k_n(0) & \textrm{otherwise}\,.
        \end{array}\right.
    \end{align*}
    It remains to prove that,  with high probability,
        \eqn{
        \label{bounds-U-and-L}
        L_n\leq k_n(t)\leq U_n.
        }
    \smallskip

    To show that the bounds in \eqref{bounds-U-and-L} hold, use Proposition~\ref{prop:theta}, as well as the fact that $\speed$ is almost-surely finite when $\theta$ is positive by Assumption~\ref{ass:speed} and Proposition~\ref{prop:speed}, to obtain that
    \begin{align*}
        \frac{U_n}{n}\probc\theta(p_+)>t,
    \end{align*}
    and
    \begin{align*}
        \frac{L_n}{n}\probc\left\{\begin{array}{ll}
            \displaystyle\theta(p_-)<t& \textrm{if $\theta(p_-)>0,$} \\
            0 & \textrm{otherwise.}
        \end{array}\right.
    \end{align*}
    Recalling that $k_n(t)/n\rightarrow t>0$, there exists $n_0\geq\max\{n_+,n_-\}$ such that, for all $n\geq n_0$,
    \begin{align*}
        \prob\Big(L_n\leq k_n(t)\leq U_n\Big)\geq1-\frac{\varepsilon}{\epsdiv}\,.
    \end{align*}
    Thus, for any $n\geq n_0$, and $k_n\geq r$,
    \begin{align*}
        \prob\Big(\prim_{k_n(t)}(\rG_n)\equiv_r\rMSF^{\rG}_+\big(\theta^{-1}(t)\big)\Big)\geq1-\varepsilon\,,
    \end{align*}
    where we have used that Prim's algorithm is a monotone process.
    This is exactly the desired result when $t\in(0,1)$. Thus, it only remains to cover the case $t=1$.
    However, this case uses the exact same proof as that of the case $0<t<1$, except that the coupling directly gives us that
    \begin{align*}
        \rMST^{\rG_n}=\prim_n(\rG_n)\equiv_r\rMSF^{\rG}_+\big(\theta^{-1}(1)\big)=\rMSF^{\rG},
    \end{align*}
    which we use as the upper bound instead of Prim's algorithm performed $U_n$ steps.
    Since the lower bound remains true in that case as well, the desired result follows.
\end{proof}

We have proved that the one-dimensional distribution converges with high probability to the desired limit, and extending this proof to the multi-dimensional distribution directly follows from the fact that the intersection of finitely many high-probability events is also a high-probability event:

\begin{corollary}[Muti-dimensional convergence]
\label{cor:marginal conv}
    Let Assumptions~\ref{ass:speed} and \ref{ass:local} hold.
   Further, assume that $\theta$ satisfies Assumption~\ref{ass:smooth}. 
    Using the notations from the beginning of Section~\ref{sec:thm}, for any $r\geq1$ and $t_,\ldots,t_k\in[0,1]$,
    \begin{align*}
        \lim_{n\rightarrow\infty}\prob\Big(\forall\ell\in[k]:\prim_{k_n(t_\ell)}(\rG_n)\equiv_r\rMSF^{\rG}_+\big(\theta^{-1}(t_\ell)\big)\Big)=1\,.
    \end{align*}
\end{corollary}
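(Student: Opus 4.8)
The plan is to deduce Corollary~\ref{cor:marginal conv} directly from the one-dimensional convergence in Proposition~\ref{prop:one dim} by a union bound. First I would fix $\varepsilon>0$, $r\geq1$, and the finitely many times $t_1,\ldots,t_k\in[0,1]$. For each $\ell\in[k]$, Proposition~\ref{prop:one dim} gives an integer $n_\ell$ such that for all $n\geq n_\ell$,
\begin{align*}
    \prob\Big(\prim_{k_n(t_\ell)}(\rG_n)\not\equiv_r\rMSF^{\rG}_+\big(\theta^{-1}(t_\ell)\big)\Big)\leq\frac{\varepsilon}{k}\,.
\end{align*}
Setting $n_0=\max\{n_1,\ldots,n_k\}$, a union bound over the $k$ complementary events yields, for all $n\geq n_0$,
\begin{align*}
    \prob\Big(\exists\ell\in[k]:\prim_{k_n(t_\ell)}(\rG_n)\not\equiv_r\rMSF^{\rG}_+\big(\theta^{-1}(t_\ell)\big)\Big)\leq\varepsilon\,,
\end{align*}
which is exactly the claimed statement since $\varepsilon$ was arbitrary.

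There is essentially no obstacle here: the corollary is a routine consequence of the fact that a finite intersection of high-probability events is a high-probability event, which is why the excerpt itself flags it as following ``directly.'' The only mild subtlety worth noting is that Proposition~\ref{prop:one dim} as stated requires $k_n\geq r$ (i.e.\ the slowly growing radius $k_n=k_n(0)$ must exceed $r$) for the conclusion to hold at a given finite $n$; since this is a single additional threshold on $n$ independent of $\ell$, one simply enlarges $n_0$ to also satisfy $k_n(0)\geq r$, and the argument goes through unchanged. One could equally phrase the whole thing in the ``$\limsup$'' formulation: each marginal probability of failure tends to $0$, hence so does their sum over the fixed finite index set $[k]$, hence so does the probability that at least one fails.

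If one wanted the joint statement to come with the joint coupling to $(\rG,\rMSF^{\rG})$ (as remarked after Theorem~\ref{thm:Prim convergence}), no extra work is needed either, since Proposition~\ref{prop:one dim} is already proved under the fixed almost-sure Skorohod coupling of $(\rG_n,\rMST^{\rG_n})$ to $(\rG,\rMSF^{\rG})$ introduced at the beginning of Section~\ref{sec:thm}; all $k$ events live on the same probability space, so the union bound automatically respects the coupling. This multi-dimensional convergence is precisely condition~\eqref{eq:lpc multi-dim} in the definition of local process convergence, so after this corollary the remaining task for Theorem~\ref{thm:Prim convergence} is the tightness part, namely verifying~\eqref{eq:lpc one-cont} and~\eqref{eq:lpc small jump}.
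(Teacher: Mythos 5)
Your proof is correct and is exactly the paper's argument: the corollary is stated as following directly from Proposition~\ref{prop:one dim} because a finite intersection of high-probability events is a high-probability event, which is precisely your union bound over the fixed index set $[k]$. The extra remarks about the $k_n(0)\geq r$ threshold and the shared coupling are sensible but add nothing beyond what the paper already takes for granted.
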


This concludes the section on marginal convergence, and in the next section, we use these results to prove that the process convergence occurs as stated in Theorem~\ref{thm:Prim convergence}.

\subsection{Process convergence}\label{sec:process}

In this section we conclude the first part of this work by providing the proof of Theorem~\ref{thm:Prim convergence}.
To do so, we simply verify that the three assumptions~\eqref{eq:lpc multi-dim},~\eqref{eq:lpc one-cont}, and~\eqref{eq:lpc small jump} from Section~\ref{sec:lpc} are satisfied.

\begin{proof}[Proof of Theorem~\ref{thm:Prim convergence}]
    Observe first that the convergence corresponding to~\eqref{eq:lpc multi-dim} directly follows from the definition of the coupling at the beginning of Section~\ref{sec:thm} along with Corollary~\ref{cor:marginal conv}.
    For the second condition, we observe that, if all edges in $B_r(\rG)$ have weight less than $1-\delta$, then $\rMSF^{\rG}_+(1-\delta)\equiv_r\rMSF^{\rG}_+(1)$.
    Using that $\rG$ is locally finite, this implies that, for all $r\geq1$,
    \begin{align*}
        \lim_{\delta\rightarrow0}\prob\left(d\Big(\rMSF^{\rG}_+(1-\delta),\rMSF^{\rG}_+(1)\Big)>\frac{1}{r+1}\right)=0\,,
    \end{align*}
    and this exactly corresponds to~\eqref{eq:lpc one-cont}.
    We now prove that the final condition in \eqref{eq:lpc small jump} holds.
    \smallskip

    \def\epsdiv{2}
    Fix $r\geq1$ and $\varepsilon>0$.
    Our goal is to show that there exists $n_0$ and $\delta_0>0$ such that, for all $n\geq n_0$ and $\delta\leq\delta_0$, we have
    \begin{align*}
        \prob\left(\exists t<t'<t''<t+\delta:\left\{\begin{array}{l}d\Big(\prim_{k_n(t)}(\rG_n),\prim_{k_n(t')}(\rG_n)\Big)>\frac{1}{r+1}\\d\Big(\prim_{k_n(t')}(\rG_n),\prim_{k_n(t'')}(\rG_n)\Big)>\frac{1}{r+1}\end{array}\right.\right)\leq\varepsilon\,.
    \end{align*}
    Using first that $\theta^{-1}$ is continuous and strictly monotone from $[0,1]$ to $[p_c,1]$, as implied by Assumption~\ref{ass:smooth}, we observe that
    \begin{align*}
        \Delta(\theta,\delta)=\sup\Big\{\big|\theta^{-1}(t+\delta)-\theta^{-1}(t)\big|:t\in[0,1-\delta]\Big\}
    \end{align*}
    is decreasing in $\delta$, strictly positive whenever $\delta>0$, and converges to $0$ as $\delta\rightarrow0$.
    Combining this with the fact that $\rG$ is locally finite, we can set $\delta_0$ so that
    \begin{align*}
        \prob\left(\min\Big\{\big|w(e)-w(e')\big|:e\neq e\in E\big(B_r(\rG)\big)\Big\}\leq\Delta(\theta,3\delta_0)\right)\leq\frac{\varepsilon}{\epsdiv}\,.
    \end{align*}
    We observe that the complement of the previous event implies that no segment of length less than $\Delta(\theta,2\delta_0)$ contains two edge weights.
    From this observation, the previous definition of $\delta_0$ implies that
    \begin{align*}
        \prob\Big(\exists t,\exists e\neq e'\in E\big(B_r(\rG)\big):\theta^{-1}(t)\leq w(e),w(e')\leq \theta^{-1}(t+2\delta_0)\Big)\leq\frac{\varepsilon}{\epsdiv}\,.
    \end{align*}
    Consider now a sequence of closed intervals $([t^-_\ell,t^+_\ell])_{1\leq\ell\leq k}$ of length $2\delta_0$ such that any interval of length $\delta_0$ is fully included into one of these intervals.
    One can for example choose $t^-_\ell=(\ell-1)\delta_0$ with $t^+_\ell=t^-_\ell+2\delta_0$ for $\ell\leq1/\delta-1$ and add the extra interval $[1-2\delta_0,1]$ at the end.
    Using Corollary~\ref{cor:marginal conv}, we can find $n_0$ such that, for all $n\geq n_0$,
    \begin{align*}
        \prob\Big(\forall\ell\in[k]\colon \prim_{k_n(t_\ell^\pm)}(\rG_n)\equiv_r\rMSF^{\rG}_+\big(\theta^{-1}(t_\ell^\pm)\big)\Big)\geq1-\frac{\varepsilon}{\epsdiv}\,.
    \end{align*}
    We conclude this proof by showing that $\delta_0$ and $n_0$ satisfy the desired property.
    We assume without loss of generality that $n_0$ is large enough so that $\rG_n\equiv_r\rG$ for all $n\geq n_0$.
    \smallskip

    Consider $n\geq n_0$ and $\delta\leq\delta_0$.
    Consider three times $t<t'<t''<t+\delta$.
    Then $t,t',t''$ are all contained in an interval of length $\delta_0$ and so they all belong to one of the interval $[t^-_\ell,t^+_\ell]$ for some $\ell$.
    However, now, if we assume that $\prim_{k_n(t_\ell^\pm)}(\rG_n)=\rMSF^{\rG}_+(\theta^{-1}(t_\ell^\pm))$ and that there exists no pair of edges whose difference of weights is less than $\Delta(\theta,2\delta_0)$, the existing coupling between $\rG_n$ and $\rG$ as well as the monotony of Prim's algorithm implies that, for all $\Tilde{t}\in\{t,t',t''\}$,
    \begin{align*}
        \prim_{k_n(\Tilde{t})}(\rG_n)\equiv_r\rMSF^{\rG}_+(t_\ell^-),
    \end{align*}
    or
    \begin{align*}
        \prim_{k_n(\Tilde{t})}(\rG_n)\equiv_r\rMSF^{\rG}_+(t_\ell^+)\,.
    \end{align*}
    It follows that
    \begin{align*}
        \min\bigg\{d\Big(\prim_{k_n(t)}(\rG_n),\prim_{k_n(t')}(\rG_n)\Big),d\Big(\prim_{k_n(t')}(\rG_n),\prim_{k_n(t'')}(\rG_n)\Big)\bigg\}\leq\frac{1}{r+1},
    \end{align*}
    and thus
    \begin{align*}
        &\prob\left(\exists t<t'<t''<t+\delta:\left\{\begin{array}{l}d\Big(\prim_{k_n(t)}(\rG_n),\prim_{k_n(t')}(\rG_n)\Big)>\frac{1}{r+1}\\d\Big(\prim_{k_n(t')}(\rG_n),\prim_{k_n(t'')}(\rG_n)\Big)>\frac{1}{r+1}\end{array}\right.\right)\\
        &\pskip\leq\prob\Big(\exists\ell\in[k]:\prim_{k_n(t_\ell^\pm)}(\rG_n)\not\equiv_r\rMSF^{\rG}_+\big(\theta^{-1}(t_\ell^\pm)\big)\Big)\\
        &\pskip\pskip+\prob\Big(\exists t,\exists e\neq e'\in E\big(B_r(\rG)\big):\theta^{-1}(t)\leq w(e),w(e')\leq \theta^{-1}(t+2\delta_0)\Big)\leq\varepsilon\,,
    \end{align*}
    as desired.
    This proves that the third condition in \eqref{eq:lpc small jump}, as stated in Section~\ref{sec:lpc}, is satisfied and thus concludes the proof of Theorem~\ref{thm:Prim convergence}.
\end{proof}


\section{Examples and counter-examples}\label{sec:examples}

In Section~\ref{sec-examples}, we first show that Theorem~\ref{thm:Prim convergence} applies to four common graph sequences and limits: the multi-dimensional grid, the Erd\H{o}s-R\'enyi graph, the configuration model, and the preferential attachment graph.
In Section~\ref{sec-assumptions}, we then provide two interesting examples of graphs satisfying exactly one of Assumption~\ref{ass:speed} and Assumption~\ref{ass:local} and state the conjecture that Assumption~\ref{ass:speed} is not strictly necessary.

\subsection{Example graph sequences}
\label{sec-examples}
In this section, we discuss important examples of graph sequences to which our results apply.
\medskip

\paragraph{\bf Grids} The minimal spanning tree on grids is closely related to percolation on $\mathbb{Z}^d$, which is a topic of intense research. See \cite{Grimmett1999} for overviews of the field, and \cite{HeyHof17} for an overview of percolation in high dimensions. Assumptions \ref{ass:speed}, \ref{ass:smooth} and \ref{ass:local} are all well known. In particular, Assumption \ref{ass:speed} holds since the infinite component is {\em unique} (see \cite[Theorem 8.1]{Grimmett1999}), so that the minimal spanning forest (and even Prim's algorithm) needs to enter it at some point (and then in fact will never leave it). Assumption \ref{ass:smooth} is \cite[Theorem 8.8]{Grimmett1999}. Finally, Assumption \ref{ass:local} follows from the fact that $|\comp^{\rG_n}_{(1)}(p)|/|V(G_n)|\convp \theta(p)$, while $|\comp^{\rG_n}_{(2)}(p)|/|V(G_n)|\convp 0$ (see e.g., \cite{HofRed06}), and the fact that this is equivalent to Assumption \ref{ass:local} by \cite{VanderHofstad2021b}. Thus, our results apply to this important case.
\medskip

\paragraph{\bf  Complete graph} Percolation on the complete graph is the Erd\H{o}s-R\'enyi random graph, and the connections between the minimal spanning tree and the critical Erd\H{o}s-R\'enyi random graph are tight \cite{Addario-Berry2017}. Our results apply, see also below for the discussion on the configuration model. The setting is slightly different though, as we now need to take $p=\lambda/n$, and consider $\lambda$ as the parameter instead of $p\in[0,1]$. 
\medskip

\paragraph{\bf Configuration model}
Assumption~\ref{ass:speed} is true as the local limit is a unimodular branching-process tree. Assumption \ref{ass:local} is true by \cite[Section 4.3.1]{VanderHofstad2024}, while continuity of $\theta$, which implies Assumption \ref{ass:smooth}, follows from \cite{Michelen2020}. The above references can easily be extended to the Erd\H{o}s-R\'enyi random graph.
\medskip

\paragraph{\bf Preferential attachment models}
Again our results apply. Assumption~\ref{ass:speed} is true by \cite{HazHofRay23}, which also proves the continuity of $p\mapsto \theta(p)$ in  Assumption \ref{ass:smooth}. Assumption \ref{ass:local} again holds since $|\comp^{\rG_n}_{(1)}(p)|/|V(G_n)|\convp \theta(p)$, while $|\comp^{\rG_n}_{(2)}(p)|/|V(G_n)|\convp 0$ by \cite{HazHofRay23}, and the fact that this is equivalent to Assumption \ref{ass:local} by \cite{VanderHofstad2021b}.
\smallskip

We conclude that there is a wide range of models to which our results apply, and where the dynamic local limit of Prim's algorithm gives insight into how the minimal spanning tree is formed.

\subsection{A discussion on the assumptions}
\label{sec-assumptions}

In this section, we provide some examples of graph sequences which do not satisfy at least one of the assumptions and study the behaviour of Prim's algorithm in these cases.
\smallskip 

\paragraph{\bf Union graphs} For the rest of the section, we let $(\rG^1_n)_{n\geq1}$ and $(\rG^2_n)_{n\geq1}$ be two sequences of graphs locally converging to $\rG^1$ and $\rG^2$, with percolation functions $\theta^1$ and $\theta^2$ and both satisfying all the assumptions from Theorem~\ref{thm:Prim convergence}.
One can for example consider $(\rG^1_n)_{n\geq1}$ and $(\rG^2_n)_{n\geq1}$ to be sequences of random $d_1$- and $d_2$-regular graphs.
We further assume that both $\rG^1_n$ and $\rG^2_n$ have size $n$ and let $(\rG_n)_{n\geq1}$ be the graph obtained by taking the union of $\rG^1_{\lfloor n/2\rfloor}$ and $\rG^2_{\lceil n/2\rceil}$, so that $\rG_n$ also has size $n$. We call such graphs {\em union graphs}. They are of interest since they are the simplest examples where we do have local convergence, but our assumptions may be false.
It may be convenient to assume that the two graphs are connected by adding an arbitrary edge between the two components (or even an arbitrary, but finite, number of edges), and we note that this does not affect the local limit.
\smallskip

Before observing the effect of this construction on the different assumptions, we first observe that the local limit of $(\rG_n)_{n\geq1}$ is the graph $\rG$ which is distributed as $\rG^1$ with probability $1/2$, and as $\rG^2$ otherwise. This implies that $\theta=\theta^{\rG}=(\theta^1+\theta^2)/2$.

\medskip

\paragraph{\bf Satisfying the assumptions for union graphs}

To start this section, we observe in which cases $(\rG_n)_{n\geq 1}$ and its limit satisfy the different assumptions.

If we denote by $p_c^1$ and $p_c^2$ the critical percolation for $\rG^1$ and $\rG^2$, we observe that Assumption~\ref{ass:speed} is satisfied if and only if $p_c^1=p_c^2$.
Indeed, by conditioning the probability from Assumption~\ref{ass:speed} according to whether $\rG$ is distributed as $\rG^1$ or $\rG^2$, we observe that
\begin{align*}
    \prob\Big(\exists v\in V(\rMSF^{\rG}):\big|\comp^{\rG}_v(p)\big|=\infty\Big)&=\frac{1}{2}\Big(\mathbbm{1}_{\{\theta^1(p)>0\}}+\mathbbm{1}_{\{\theta^2(p)>0\}}\Big)\,.
\end{align*}
From this formula, and using that $\theta=(\theta^1+\theta^2)/2$, we see that Assumption~\ref{ass:speed} holds if and only if $\theta^1$ and $\theta^2$ become positive at the same time, i.e., if and only if $p_c^1=p_c^2$.
\smallskip

To test whether Assumption~\ref{ass:smooth} holds here, we first observe that, if $\theta^1$ and $\theta^2$ both satisfy this assumption, then $\theta$ also satisfies it when $p_c^1=p_c^2$. In the case where $p_c^1\neq p_c^2$, since $p_c=\min\{p_c^1,p_c^2\}$, we see that Assumption~\ref{ass:smooth} is satisfied if and only if the graph with larger critical value has a {\em continuous} percolation function.
For example, if we assume that $p_c^1<p_c^2$, then we want $\theta^2$ to be continuous on the whole interval $[0,1]$ (or equivalently on $(p_c^1,1]$ since $\theta^2(p)=0$ whenever $p<p_c^2$).
\smallskip

Finally, we can see that Assumption~\ref{ass:local} does not hold if both $\theta^1(p)$ and $\theta^2(p)$ are positive for some $p<1$.
Indeed, in that case both $\rG^1_{\lfloor n/2\rfloor}$ and $\rG^1_{\lceil n/2\rceil}$ have components of linear sizes, and using that only one edge connects the two graphs, we see that, for any $k\geq0$,
\begin{align*}
    &\liminf_{n\rightarrow\infty}\frac{1}{n^2}\expec\bigg[\Big|\Big\{u,v\in V(\rG_n)\colon \big|\comp^{\rG_n}_u(p)\big|\geq k,\big|\comp^{\rG_n}_v(p)\big|\geq k,\comp^{\rG_n}_u(p)\cap\comp^{\rG_n}_v(p)=\varnothing\Big\}\Big|\bigg]\\
    &\qquad\geq(1-p)\frac{\theta^1(p)\theta^2(p)}{4}\,.
\end{align*}
We further observe that Assumption~\ref{ass:local} is still not satisfied when we remove the edge connecting the two graphs, or add a finite number more of such edges.

\medskip

\paragraph{\bf Satisfying our theorem for union graphs}

In this part, we now evaluate in which case the sequence of union graphs $(\rG_n)_{n\geq1}$ satisfies Theorem~\ref{thm:Prim convergence}.
Interestingly, while it does not always satisfy it in its current form, it actually does by adapting the formula, leading us to believe that a more general result can be extracted from these observations.

First of all, since the result applies to $(\rG_n)_{n\geq1}$ when all assumptions are met, we now focus on the cases where one of the assumptions is {\em not} satisfied.
We thus assume without loss of generality that $0<p_c^1<p_c^2<1$.
In that case, if the root of $\rG_n$ lies within the half of the vertices corresponding to $\rG^i$, then, on any ball of finite radius, it seems that $\prim_k(\rG_n)$ behaves similarly to $\prim_k(\rG^i_{\lfloor n/2\rfloor})$, whose local limit is $\rMSF^{\rG^i}_+(p)$ with $p$ satisfying an equation of the form $\theta^i(p)\simeq2k/n$.

The exact behaviour is actually a lot more complicated if there is an edge connecting the two graphs.
Indeed, depending on when the endpoint of this edge is explored, and what the edge weight is, the exploration of the current graph $\rG^i_m$ might be suddenly interrupted in order to explore the other graph.
Naturally, this behaviour becomes more and more complicated as we add more and more edges between the two graphs, while keeping the local limit intact.

For simplicity, we now assume that the two graphs are actually not connected to each other.
Forgetting about the process convergence for now, the previous observations lead, in that case, to the following convergence:
\begin{align*}
    \prim_{k_n(t)}(\rG_n)\rightarrow\left\{\begin{array}{ll}
        \rMSF^{\rG^1}_+\Big((\theta^1)^{-1}\big(2t\big)\Big) & \textrm{w.p. }1/2 \\
        \rMSF^{\rG^2}_+\Big((\theta^2)^{-1}\big(2t\big)\Big) & \textrm{w.p. }1/2\,,
    \end{array}\right.
\end{align*}
meaning that Theorem~\ref{thm:Prim convergence} can still hold without all the assumptions being satisfied, by adapting the statement.
Note that a similar formula can be given for more complex union graphs, where we combine more than two graphs and do not necessarily have equal proportions of vertices in each graph.

While the previous convergence already has a major caveat in the fact that we might completely change the previous behaviour by simply adding one edge beween the two graphs, it still gives the impression that Theorem~\ref{thm:Prim convergence} can easily be extended to more complex graph structures.
However, a very similar case as the previous example, but with a distinctly different behaviour, is the sequence of graphs $(\rG_n)_{n\geq1}$ where instead of taking the union between two graphs, we flip a (fair) coin and choose the sequence accordingly.
More precisely, $(\rG_n)_{n\geq1}$ is exactly $(\rG_n^1)_{n\geq1}$ with probability $1/2$, and $(\rG^2_n)_{n\geq1}$ otherwise.
Observe that this sequence of random graphs has the same limit as the union graph: it is $\rG^1$ with probability $1/2$ and $\rG^2$ otherwise.
We emphasize, though, that the behaviour of Prim's algorithm is quite different, as the exploration will have to go through the whole graph and thus look more like
\begin{align*}
    \prim_{k_n(t)}(\rG_n)\rightarrow\left\{\begin{array}{ll}
        \rMSF^{\rG^1}_+\Big((\theta^1)^{-1}(t)\Big) & \textrm{w.p. }1/2, \\
        \rMSF^{\rG^2}_+\Big((\theta^2)^{-1}(t)\Big) & \textrm{w.p. }1/2\,.
    \end{array}\right.
\end{align*}
Once again, in this case it is possible to extend Theorem~\ref{thm:Prim convergence} without satisfying all the assumptions.
\medskip

Because of the previous examples, we believe that Theorem~\ref{thm:Prim convergence} can be extended past the assumptions we have made here.
However, the complexity of possible behaviours, as well as the strong dependency on the {\em exact details} of the construction sequence of graphs considered, and not only their local limit, made us prefer its current format.
It is still worth noting that, while it is possible to create much more intricate local behaviour than taking unions of converging graphs, or choosing sequence of converging graphs at random, these two interesting cases can actually still be well studied with our main result by simply adapting the limits accordingly.
\bigskip

\noindent
\begin{minipage}{12.5cm}
{\paragraph{\bf Acknowledgement}
This work is supported by the Netherlands Organisation for Scientific Research (NWO) through Gravitation-grant NETWORKS-024.002.003.
The work of RvdH is further supported by the National Science Foundation under Grant No. DMS-1928930 while he was in residence at the Simons Laufer Mathematical Sciences Institute in Berkeley, California, during the spring semester 2025. The work of BC is further supported by the European Union's Horizon 2020 Research and Innovation Programme under the Marie Sk\l{}odowska-Curie grant agreement No.~101034253.
}
\end{minipage}
  \hfill
  \begin{minipage}{2.5cm}
    \vspace{-4pt}
    \raggedleft
    \includegraphics[width = 2.5cm]{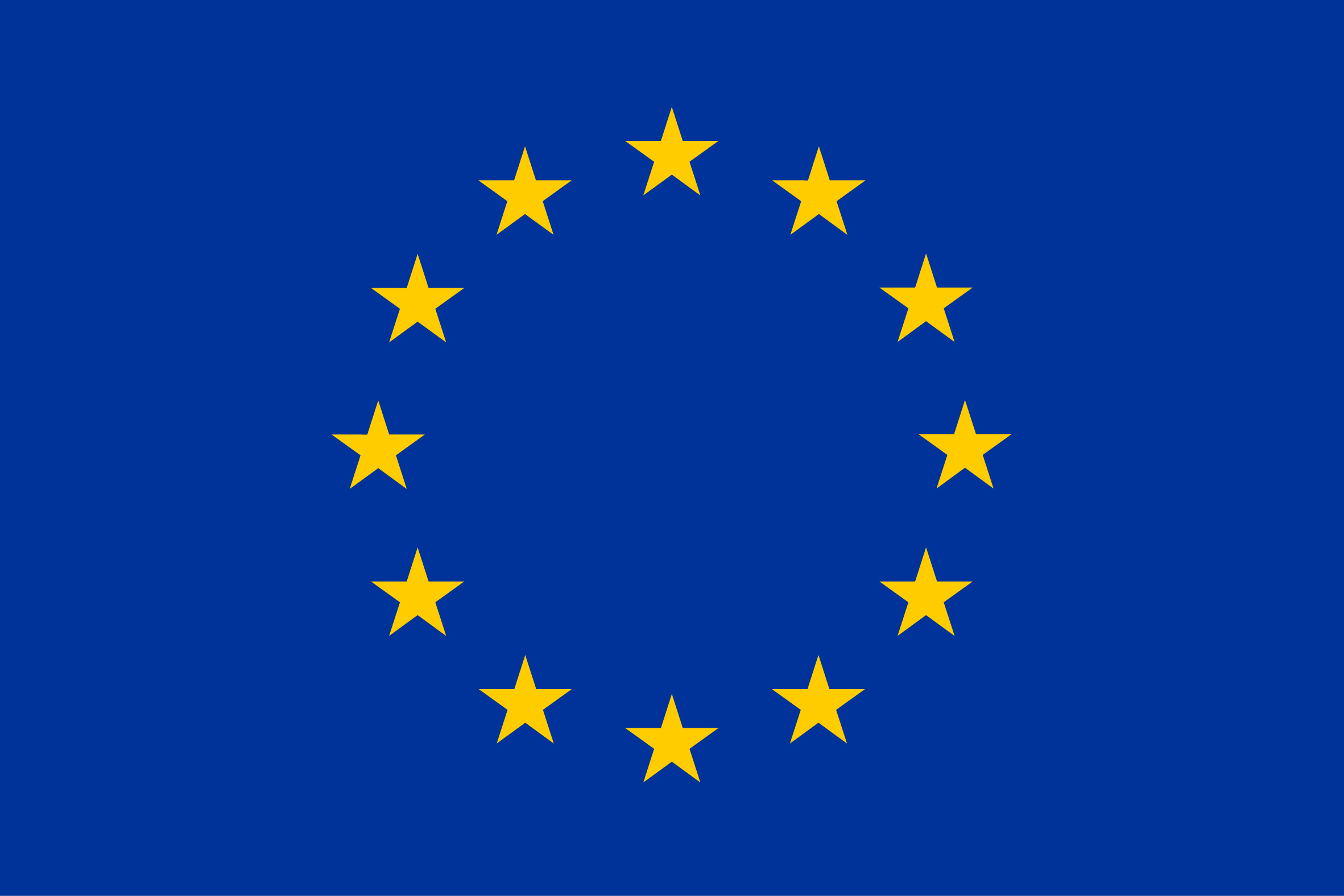}
  \end{minipage}
\vspace{0.5cm}

\appendix
\section{List of symbols and notation}\label{app:notations}
\begin{longtable}{ll}
    MST & Minimum spanning tree\\
    IPC & Invasion percolation cluster\\
    &\\
w.h.p. & With high probability \\
i.i.d. & Independent and identically distributed\\
$\uniform$&Standard uniform distribution on $[0,1]$\\
&\\
    $G=(V(G),E(G))$ & Graph\\
    $\wG=(G,w)$& Weighted graph\\
    $(G,o)$& Rooted graph with root $o$\\
    $\rG=(G,o,w)$ & Rooted weighted graph with i.i.d.\ $\uniform$ edge weights\\
    $\wG(p)$& Graph $\wG$ obtained by only keeping edges of weight less than $p$\\
    $\rG(p)$ & Rooted weighted graph obtained by only keeping edges of weight less than $p$\\
    $\MSF^{\rG}=\MSF^{(G,w)}$ &Minimum spanning forest of $\wG$\\
    $\rMSF^{\rG}=(\MSF^{\rG},o)$& Rooted minimum spanning forest of $\rG$\\
    $\prim_{k}(\rG)$ & Rooted subtree of $\rG$ obtained by performing $k$ steps of Prim's algorithm\\
    $\rMSF^{\rG}_+(p)$ & Expanded invasion percolation cluster at level $p$ (see Section~\ref{sec:msf + eipc})\\   
   
    &\\
        $\theta(p)$ &Percolation survival probability\\
        $p_c$ & Critical percolation coefficient\\
        $\comp^{\rG}_o(p)$&Component of $o$ when only keeping edges with weight less than $p$\\
        $\comp^{\rG_n}_{(1)}(p)$ &Largest component when only keeping edges with weight less than $p$\\
        $\comp^{\rG_n}_{(2)}(p)$ &Second largest component when only keeping edges with weight less than $p$\\
        &\\
$\probc $ & Convergence in probability\\
$\lwc $ & Local weak convergence (see Section~\ref{sec:lwc})\\
$\lcp $ & Local convergence in probability (see Section~\ref{sec:lwc})\\
$\lpc $ & Local process convergence (see Section~\ref{sec:lpc})\\
\end{longtable}
\bibliographystyle{abbrv}
\bibliography{prim_conv}

\end{document}